\newtheorem{theorem}{Theorem}[section]
\newtheorem{proposition}[theorem]{Proposition} 
\newtheorem{lemma}[theorem]{Lemma}
\newtheorem{definition}[theorem]{Definition}
\newtheorem{remark}[theorem]{Remark}
\newcommand \NN {\mathbb N} 
\newcommand \id {\mathrm{id}} 
\newcommand \setbis[2] {\{#1\; |\; #2\}} 
\newcommand \ctg[1] {\mathbf{#1}} 
\newcommand \cat {\ctg{Cat}} 
\newcommand \grp {\ctg{Grp}} 
\newcommand \glob {\ctg{O}} 
\newcommand \ncat[1] {\text{$#1$-}{\cat}} 
\newcommand \ocat {\ncat{\infty}} 
\newcommand\wcat\ocat
\newcommand \ngrp[1] {\text{$#1$-}{\grp}} 
\newcommand \ogrp {\ngrp{\infty}} 
\newcommand \wncat {\ncat{(\infty, n)}} 
\newcommand\wgrp\ogrp
\newcommand \weakeq[1] {\Weq_{#1}} 
\newcommand \fibr[1] {\mathcal{F}_{#1}} 
\newcommand \cofibr[1] {\mathcal{C}_{#1}} 
\newcommand \IN {U} 
\newcommand \FG {F} 
\newcommand \MG {M} 
\newcommand \TT {T} 
\newcommand \I {I} 
\newcommand \Weq {\mathcal W} 
\newcommand \Imm {\mathcal Z} 
\newcommand \para \parallel
\newcommand \Comp[1] {\ast_{#1}}
\newcommand \unit[1] {1_{#1}}
\newcommand \Sce[1] {#1^\flat} 
\newcommand \Tge[1] {#1^\sharp} 
\newcommand \sce[1] {\sigma_{#1}} 
\newcommand \tge[1] {\tau_{#1}} 
\newcommand \sss[1] {\hat{\sigma}_{#1}} 
\newcommand \ttt[1] {\hat{\tau}_{#1}} 
\newcommand \inv[1] {{#1}^{-1}}
\newcommand \winv[1] {\bar{#1}}
\newcommand \eqv \sim
\newcommand \Hom[3] {#1(#2,#3)} 
\newcommand \HOM[2] {[#1,#2]} 
\newcommand \act \cdot 
\newcommand \COMP \circledast 
\newcommand \Sht[1] {{\left [#1 \right ]}} 
\newcommand \cto \curvearrowright 
\newcommand \Cto[3] {#1 \; | \; #2 \cto #3}
\newcommand \Pal[1] {#1^\natural} 
\newcommand \comp \ast 
\newcommand \Top {\pi^1}
\newcommand \Bot {\pi^2} 
\newcommand \Triv \tau 
\newcommand \cnx \Gamma 
\newcommand \Cnx[1] {\cnx(#1)}
\newcommand \lft \hat 
\newcommand \rht \tilde
\newcommand \OO[1] {\mathbf O^{#1}} 
\newcommand \DO {\partial \OO} 
\newcommand \PP[1] {\mathbf P^{#1}}
\newcommand \ii[1] {\mathbf i_{#1}}
\newcommand \jj[1] {\mathbf j_{#1}}
\newcommand \kk[1] {\mathbf k_{#1}}
\newcommand \ooo[1] {\mathbf o_{#1}} 
\newcommand \pp[1] {\mathbf p_{#1}}
\newcommand \uar {\ar@/^1pc/}
\newcommand \dar {\ar@/_1pc/}
\newcommand \Uar {\ar@/^2pc/}
\newcommand \Dar {\ar@/_2pc/}
\newcommand \ear {\ar@{-->}}
\newcommand \edar {\ar@{-->}@/_1pc/}
\newcommand \arcof {\ar@{>->}}
\newcommand \arfib {\ar@{->>}}
\newcommand \OT \leftleftarrows 
\newcommand{\doubl}[2]{\ar@<2pt>[l]^{#2}\ar@<-2pt>[l]_{#1}}
\newcommand{\doubr}[2]{\ar@<2pt>[r]^{#1}\ar@<-2pt>[r]_{#2}}
\newcommand{\doubld}[2]{\ar@<2pt>[ld]^{#2}\ar@<-2pt>[ld]_{#1}}
\newcommand{\doubd}[2]{\ar@<2pt>[d]^{#2}\ar@<-2pt>[d]_{#1}}
\newcommand\DeclareMathOperatorRm[1]{%
  \expandafter\DeclareMathOperator\csname #1\endcsname{#1}}
\newcommand\DeclareMathOperatorSf[1]{%
  \expandafter\DeclareMathOperator\csname #1\endcsname{\mathsf{#1}}}
\newcommand\ndef[1]{\emph{#1}}
\DeclareMathOperator{\Coker}{Coker}
\DeclareMathOperator\A{A}
\newcommand{\oo}{$\infty$\nobreakdash}
\DeclareMathOperator\dHom{\mathsf{Hom}}
\newcommand\Homens[3]{#1(#2, #3)_0}
\newcommand \Dn[1]{#1}
\newcommand{\Ths}[1]{\sigma_{#1}^{}}
\newcommand{\Tht}[1]{\tau_{#1}^{}}
\newcommand{\Glt}[2][\@empty]{%
  \ifx\@empty#1
    t_{#2}%
  \else
    t_{#2}^{#1}%
  \fi
}
\newcommand{\Gls}[2][\@empty]{%
  \ifx\@empty#1
    s_{#2}%
  \else
    s_{#2}^{#1}%
  \fi
}
\newcommand{\Glw}[2][\@empty]{%
  \ifx\@empty#1
    w_{#2}^{}%
  \else
    w_{#2}^{#1}%
  \fi
}
\newcommand\Glinv[1]{{#1}^{-1}}
\newcommand\Glid[1]{1_{#1}}
\newcommand{\Glk}[2][\@empty]{%
  \ifx\@empty#1
    k_{#2}%
  \else
    k_{#2}^{#1}%
  \fi
}
\newcommand\Wgr{\mathcal{W}_{\mathrm{gr}}}
\newcommand\Wcc{\mathcal{W}_{\mathrm{cc}}}
\newcommand\TFcc{\mathcal{T\!F}_{\mathrm{cc}}}
\newcommand\Wfolk{\mathcal{W}_{\mathrm{folk}}}
\newcommand\TFfolk{\mathcal{T\!F}_{\mathrm{folk}}}
\newcommand\Grp{\ctg{Grp}}
\newcommand{\CrC}{\ctg{CrC}}
\newcommand\CG{\mathbf{CGrp}_{\ge 2}}
\def\ufootnote{\xdef\@thefnmark{}\@footnotetext} 
\begin{document}

\title{The Brown-Golasi\'nski model structure\\on strict $\infty$-groupoids revisited}

\date{}

\author{\textsc{Dimitri Ara} and \textsc{Fran\c{c}ois M\'etayer}}





\maketitle

\ufootnote{\textit{2000 Mathematics Subject Classification.} 18D05, 18G55,
55U35.}
\ufootnote{\textit{Key words and phrases.} $\infty$-category, $\infty$-groupoid,
crossed complex, model category.}

\begin{abstract}
We prove that the folk model structure on strict \oo-categories transfers to
the category of strict \oo-groupoids (and more generally to the category of
strict $(\infty, n)$-categories), and that the resulting model structure on
strict \oo-groupoids coincides with the one defined by Brown and
Golasi\'nski via crossed complexes.
\end{abstract}

\section*{Introduction}\label{sec:intro}

In~\cite{browngolasinski:modscc}, Brown and Golasi\'nski build a model
structure on the category of crossed complexes. In an earlier
work~\cite{brownhiggins:grpcc}, Brown and  Higgins established an
equivalence of categories between crossed complexes and strict
\oo-groupoids, whence a model structure on the latter category. 

On the other hand, there is a ``folk'' model structure on strict
\oo-categories recently discovered by Lafont, Worytkiewicz and the second
author~\cite{lafontetal:folkms}, which extends previously known model
structures on categories~\cite{joyaltierney:strscs} and
$2$-categories~\cite{lack:quitwo}. Note that this model structure is also known as the ``natural'' or the ``categorical'' model structure by various authors.

This immediately raised the questions whether the folk model structure on
\oo-cat\-e\-go\-ries may be transferred to \oo-groupoids by inclusion, and in that
case whether the Brown-Golasi\'nski model structure may be recovered this
way.

The purpose of the present work is to show that both questions have
affirmative answers.

Our paper is organized as follows. In Section~\ref{sec:defs}, we recall the
basic definitions of strict \oo-categories and strict \oo-groupoids. Then,
in Section~\ref{sec:brown}, we describe the Brown-Golasi\'nski model
structure on crossed complexes and \oo-group\-oids. In
Section~\ref{sec:folkmsgr}, we prove the transfer theorem leading to the
definition of the folk model structure on strict \oo-groupoids. Our proof
uses neither crossed complexes nor the existence of the Brown-Golasi\'nski
model structure.  Moreover, it also applies to the category of strict
$(\infty, n)$-categories for a fixed $n$ (that is \oo-categories such that
every $m$-arrow is invertible for $m > n$). Finally, in
Section~\ref{sec:compar}, we show that the two model structures previously
defined on \oo-groupoids are in fact the same.

We thank Ronald Brown and Urs Schreiber, whose questions motivated us to go further in this work.

\section{Strict $\infty$-groupoids}\label{sec:defs}

The purpose of this section is to introduce the definitions and notations about strict
\oo-group\-oids and their weak equivalences that we will use in the sequel of this
paper. Our presentation is essentially the same as the one given in
\cite{ara:infgrp}.

\subsection{Globular sets}

Let us denote by $\glob$ the \ndef{globular category}, that is the category generated
by the graph
\[
\xymatrix{
\Dn{0} \ar@<.6ex>[r]^-{\Ths{0}} \ar@<-.6ex>[r]_-{\Tht{0}} &
\Dn{1} \ar@<.6ex>[r]^-{\Ths{1}} \ar@<-.6ex>[r]_-{\Tht{1}} & 
\cdots \ar@<.6ex>[r]^-{\Ths{i-1}} \ar@<-.6ex>[r]_-{\Tht{i-1}} &
\Dn{i} \ar@<.6ex>[r]^-{\Ths{i}} \ar@<-.6ex>[r]_-{\Tht{i}} &
\Dn{i+1} \ar@<.6ex>[r]^-{\Ths{i+1}} \ar@<-.6ex>[r]_-{\Tht{i+1}} &
\dots
}
\]
under the coglobular relations
\[\Ths{i+1}\Ths{i} = \Tht{i+1}\Ths{i}\quad\text{and}\quad\Ths{i+1}\Tht{i} =
\Tht{i+1}\Tht{i}, \qquad i \ge 0.\]

A \ndef{globular set} or \ndef{\oo-graph} is a presheaf on $\glob$. 
A globular set $X$ amounts to a diagram of sets
\[
\xymatrix{
\cdots \ar@<.6ex>[r]^-{\Gls{n+1}} \ar@<-.6ex>[r]_-{\Glt{n+1}} &
X_{n+1} \ar@<.6ex>[r]^-{\Gls{n}} \ar@<-.6ex>[r]_-{\Glt{n}} &
X_n \ar@<.6ex>[r]^-{\Gls{n-1}} \ar@<-.6ex>[r]_-{\Glt{n-1}} &
\cdots \ar@<.6ex>[r]^-{\Gls{1}} \ar@<-.6ex>[r]_-{\Glt{1}} &
X_1 \ar@<.6ex>[r]^-{\Gls{0}} \ar@<-.6ex>[r]_-{\Glt{0}} &
X_0
}
\]
satisfying the globular relations
\[\Gls{i}\Gls{i+1} = \Gls{i}\Glt{i+1}\quad\text{and}\quad\Glt{i}\Gls{i+1} =
\Glt{i}\Glt{i+1}, \qquad i \ge 0.\]

For $i \ge j \ge 0$, we will denote by $\Gls[i]{j}$ and $\Glt[i]{j}$ the
maps from $X_i$ to $X_j$ defined by
\[\Gls[i]{j} = \Gls{j}\cdots\Gls{i-2}\Gls{i-1}\quad\text{and}\quad
  \Glt[i]{j} = \Glt{j}\cdots\Glt{i-2}\Glt{i-1}.\]
If $X$ is a globular set, we will call $X_0$ the set of \ndef{objects} of
$X$ and $X_n$ for $n \ge 0$ the set of \ndef{$n$-arrows} or
\ndef{$n$-cells}. 
The notation $u \colon x \to y$ will mean that $u$ is an
$n$-arrow for $n \ge 1$ whose source is an $(n-1)$-arrow $x$ (that is
$\Gls{n-1}(u) = x$) and whose target is an $(n-1)$-arrow $y$ (that is
$\Glt{n-1}(u) = y$). 
We will say that two $n$-arrows $u$ and $v$ are \ndef{parallel} if either $n = 0$,
or $n \ge 1$ and $u, v$ have same source and same target.
For $i \ge j \ge 0$, if $u$ is an $i$-arrow, we will often write
$\Gls{j}(u)$ for $\Gls[i]{j}(u)$ and similarly
$\Glt{j}(u)$ for $\Glt[i]{j}(u)$. 

If $u$ and $v$ are $n$-arrows, $X(u, v)$ will
denote the globular set whose $k$-arrows are the $(n+k+1)$-arrows $a$ of $G$ such that
$\Gls{n}(a) = u$ and $\Glt{n}(a) = v$. In particular, $X(u, v)_0$ is the set
of $(n+1)$-arrows $a \colon u \to v$ in $X$.

\subsection{Strict $\infty$-categories}

An \ndef{\oo-precategory} is a globular set $C$ endowed with maps
\[
  \begin{split}
  \comp_j^i & \colon (X_i, \Gls[i]{j}) \times_{X_j} (\Glt[i]{j}, X_i) \to 
      X_i,\quad i > j \ge 0,
      \\
  \Glk{i} & \colon X_{i-1} \to X_i, \quad i \ge 1,
  \end{split}
\]
such that
\begin{enumerate}
    \item 
      for every
      $(u, v)$ in $(X_i, \Gls[i]{j}) \times_{X_j} (\Glt[i]{j}, X_i)$ with
      $i > j \ge 0$, we have
  \[
  \Gls{i-1}(u \comp_j^i v) = 
  \begin{cases}
    \Gls{i-1}(v), & j = i - 1 \\
    \Gls{i-1}(u) \comp_j^{i-1} \Gls{i-1}(v), & j < i - 1
  \end{cases}\text{;}
  \]
    \item  
      for every
      $(u, v)$ in $(X_i, \Gls[i]{j}) \times_{X_j} (\Glt[i]{j}, X_i)$ with
      $i > j \ge 0$, we have
\[
  \Glt{i-1}(u \comp_j^i v) = 
  \begin{cases}
    \Glt{i-1}(u), & j = i - 1 \\
    \Glt{i-1}(u) \comp_j^{i-1} \Glt{i-1}(v), & j < i - 1
  \end{cases}\text{;}
  \]
  \item for every $u$ in $X_i$ with $i \ge 0$, we have
  \[
  \Gls{i}\Glk{i+1}(u) = u = \Glt{i}\Glk{i+1}(u).
  \] 
\end{enumerate}

For $i \ge j \ge 0$, we will denote by $\Glk[j]{i}$ the map from $X_j \to
X_i$ defined by
  \[ \Glk[j]{i} = \Glk{i}\cdots\Glk{j+2}\Glk{j+1}. \]
If $u$ and $v$ are $n$-arrows for $n \ge 1$ of an \oo-precategory, we will
often write $u \comp_k v$ for $u \comp^n_k v$. If $u$ is an
$n$-arrow, we will often write $\Glid{u}$ for the iterated identity $\Glk[n]{m}(u)$
in a dimension $m \ge n$ clear by the context.

\begin{definition}
A \ndef{strict \oo-category} is an \oo-precategory $X$ such that the following axioms are
satisfied:
\begin{enumerate}
  \item Associativity \\
     for every $(u, v, w)$ in
    $(X_i, \Gls[i]{j}) \times_{X_j} (\Glt[i]{j}, X_i,
    \Gls[i]{j}) \times_{X_j} (\Glt[i]{j}, X_i)$ with $i > j \ge 0$,
    we have
    \[ (u \comp_j v) \comp_j w = u \comp_j (v \comp_j w)\text{;} \]
  \item Exchange law \\
   for every $(u, u', v, v')$ in
    \[ (X_i, \Gls[i]{j}) \times_{X_j} (\Glt[i]{j}, X_i, \Gls[i]{k})
    \times_{X_k} (\Glt[i]{k}, X_i, \Gls[i]{j}) \times_{X_j} (\Glt[i]{j},
    X_i),\]
    with $i > j > k \ge 0$, we have
    \[ (u \comp_j u') \comp_k (v \comp_j v') = (u \comp_k v)
    \comp_j ( u' \comp_k v')\text{;} \]
  \item Units \\
   for every $u$ in $X_i$ with $i \ge 1$ and every $j$ such that $i > j \ge
   0$, we have
    \[ u \comp_j \Glid{\Gls{j}(u)} = u = \Glid{\Glt{j}(u)} \comp_j u\text{;} \]
  \item Functoriality of units \\
    for every
    $(u, v)$ in $(X_i, \Gls[i]{j}) \times_{X_j} (\Glt[i]{j} ,X_i)$
    with $i > j \ge 0$, we have the following equality between
    $(i+1)$-arrows:
    \[ \Glid{u \comp_j v} = \Glid{u} \comp_j \Glid{v}. \]
\end{enumerate}

A \ndef{morphism of strict \oo-categories} or \ndef{\oo-functor} is a morphism
of globular sets compatible with the maps $\comp^i_j$ and $\Glk{i}$.
\end{definition}

We will denote by $\wcat$ the category of \oo-categories. 
This category is a full reflexive subcategory of the presheaf category of
globular sets. Moreover, it is stable under filtered colimits.  Hence, by
Theorem 1.46 of \cite{adamekrosicky:locpac}, $\wcat$ is locally presentable.

Note that if $u$ and $v$ are two $n$-arrows of a strict \oo-category $C$,
the globular set $C(u, v)$ inherits a structure of strict \oo-category.

\subsection{Strict $\infty$-groupoids}

Let $C$ be a strict \oo-category and $u$ an $i$-arrow for $i \ge 1$. For $j$
such that $0 \le j < i$, a \ndef{$\comp^i_j$-inverse} $v$ of $u$ is an
$i$-arrow such that $\Gls{j}(v) = \Glt{j}(u)$ and $\Glt{j}(v) = \Gls{j}(u)$, satisfying
\[
u \comp_j v = \Glid{\Glt{j}(u)}
\quad\text{and}\quad
v \comp_j u = \Glid{\Gls{j}(u)}.
\]
It is easy to see that if it exists, such an inverse is unique.
For $i > j \ge 0$, we will say that $C$ \ndef{admits $\comp^i_j$-inverses}
if every $i$-arrow of $C$ admits a $\comp^i_j$-inverse.

\begin{definition}
A \ndef{strict \oo-groupoid} is a strict \oo-category which admits
$\comp^i_j$-inverses for every $i > j \ge 0$.  We will denote by $\wgrp$ the
full subcategory of $\wcat$ whose objects are strict \oo-groupoids.

Let $n \ge 0$. A \ndef{strict $(\infty, n)$-category} is a
strict \oo-category which admits $\comp^i_j$-inverses for every $i > j \ge
n$.  We will denote by $\wncat$ the full subcategory of $\wcat$ whose
objects are strict $(\infty, n)$-categories. Note that for $n = 0$ we
recover the category of strict \oo-groupoids.
\end{definition}

The same argument as for $\wcat$ shows that $\wgrp$ is a locally presentable
category.

If $G$ is a strict \oo-groupoid and $u$ is an $i$-arrow of $G$ for $i \ge
1$, we will denote by $\Glw[i]{j}(u)$ or simply by $\Glw{j}(u)$ the
$\comp^i_j$-inverse of $u$ and by $\Glinv{u}$ the $\comp^i_{i-1}$-inverse.
Note that if $u$ and $v$ are two $n$-arrows of a strict \oo-groupoid $G$,
the strict \oo-category $G(u, v)$ is a strict \oo-groupoid.

\begin{proposition}
Let $C$ be a strict \oo-category. Then the following assertions are
equivalent:
\begin{enumerate}
\item $C$ is a strict \oo-groupoid;
\item $C$ admits $\comp^i_{i-1}$-inverses for every $i \ge 1$;
\item $C$ admits $\comp^i_0$-inverses for every $i \ge 1$;
\item for all $i \ge 1$, there exists $j$ satisfying $0 \le j < i$ such that
$C$ admits $\comp^i_j$-inverses.
\end{enumerate}
\end{proposition}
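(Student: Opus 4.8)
The plan is to reduce the whole proposition to a single implication. The implications \textup{(1)}$\Rightarrow$\textup{(2)} and \textup{(1)}$\Rightarrow$\textup{(3)} are immediate from the definition of a strict $\infty$-groupoid, and \textup{(2)}$\Rightarrow$\textup{(4)}, \textup{(3)}$\Rightarrow$\textup{(4)} follow by taking $j=i-1$, resp.\ $j=0$. Since \textup{(4)} is weaker than both \textup{(2)} and \textup{(3)}, it is enough to establish \textup{(4)}$\Rightarrow$\textup{(1)}; this single implication closes the cycle and subsumes \textup{(2)}$\Rightarrow$\textup{(1)} and \textup{(3)}$\Rightarrow$\textup{(1)}. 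I would prove \textup{(4)}$\Rightarrow$\textup{(1)} by induction on $i$, showing that under \textup{(4)} every $i$-arrow admits a $\comp^i_{j'}$-inverse for all $0\le j'<i$. The case $i=1$ is exactly \textup{(4)} in dimension $1$; so I assume the statement in all dimensions $<i$, i.e.\ that every arrow of $C$ of dimension $<i$ is invertible in every direction, and I let $j<i$ be such that every $i$-arrow admits a $\comp^i_j$-inverse (given by \textup{(4)}). The inductive step is then the following \emph{key lemma}: if every arrow of dimension $<i$ is invertible in every direction and, for one fixed $j$, every $i$-arrow admits a $\comp^i_j$-inverse, then every $i$-arrow admits a $\comp^i_{j'}$-inverse for every $j'<i$.

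To prove the key lemma it suffices to move the level of invertibility up and down by one, since $\{0,\dots,i-1\}$ is connected under $j\mapsto j\pm1$. The main tool is the d\'ecalage furnished by the hom-$\infty$-categories: for parallel $m$-arrows $a,b$, the $k$-arrows of $C(a,b)$ are the $(m+k+1)$-arrows $w$ of $C$ with $\Gls{m}(w)=a$ and $\Glt{m}(w)=b$, and the operation $\comp^k_l$ of $C(a,b)$ is the restriction of $\comp^{m+k+1}_{m+l+1}$. Since a $\comp^i_j$-inverse with $j>m$ has the same $m$-source and $m$-target as the arrow it inverts, it again lies over $(a,b)$; hence, for $j>m$, every $i$-arrow over $(a,b)$ admits a $\comp^i_j$-inverse in $C$ if and only if, viewed in $C(a,b)$, it admits a $\comp^{i-m-1}_{j-m-1}$-inverse. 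Applying this with $m=j-2$ (for $j\ge2$) transports the passage between levels $j-1$ and $j$ in dimension $i$ to the passage between levels $0$ and $1$ in dimension $i-j+1$ inside each $C(a,b)$; and these hom-$\infty$-categories again have all arrows of dimension $<i-j+1$ invertible in every direction, since such arrows are arrows of $C$ of dimension $<i$. Thus the whole key lemma reduces to its base case: in a strict $\infty$-category $D$ all of whose arrows of dimension $<d$ are invertible in every direction ($d\ge2$), every $d$-arrow admits a $\comp^d_0$-inverse if and only if every $d$-arrow admits a $\comp^d_1$-inverse.

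The base case is where the real work lies, and I expect it to be the main obstacle. Given a $d$-arrow $u$ together with, say, its $\comp^d_1$-inverse, I would produce a candidate $\comp^d_0$-inverse by whiskering that inverse with the $\comp^1_0$-inverses of the boundary $1$-arrows $\Gls{1}(u)$ and $\Glt{1}(u)$ (which exist since $D$ is invertible in every direction below dimension $d$), following the classical recipe that builds the ``horizontal'' inverse of an invertible $2$-cell out of its ``vertical'' inverse; the reverse implication is handled symmetrically. What must then be checked is that the two defining equations $u\comp_0(\,\cdot\,)=\Glid{\Glt{0}(u)}$ and $(\,\cdot\,)\comp_0 u=\Glid{\Gls{0}(u)}$ hold for this candidate, and this is a direct but somewhat delicate computation using associativity, the exchange law, the unit axioms and the functoriality of units. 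Setting up the whiskered formula in arbitrary dimension $d$ and carrying out this verification is the only non-formal part of the argument; everything else is bookkeeping around the induction and the d\'ecalage.
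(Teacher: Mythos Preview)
Your proposal is correct and follows the same overall architecture as the paper: an induction on the top dimension, a reduction step that brings the two composition levels down to $0$ and $1$, and a whiskering computation that converts a vertical inverse into a horizontal one (and conversely). The difference lies in the reduction technique. You shift levels via the hom-$\infty$-categories $C(a,b)$ (d\'ecalage), which moves $j-1,j$ to $0,1$ but leaves the top dimension $d=i-j+1$ arbitrary; your ``base case'' is therefore still a statement about $d$-arrows for general $d\ge 2$, and you plan to carry out the whiskering verification in that generality. The paper instead observes that for any $i>j>k$ the $2$-graph $C_i\rightrightarrows C_j\rightrightarrows C_k$ is itself a $2$-category, which in one stroke reduces the whole step to the case $i=2$, $j=1$, $k=0$; the computation then becomes the familiar $2$-categorical fact that if $a\colon u\to v$ has vertical inverse $a^{-1}$ then $v^{-1}\comp_0 a^{-1}\comp_0 u^{-1}$ is a horizontal inverse, and vice versa. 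Your approach works and is conceptually natural, but you could shorten your own base case by the same trick: the $2$-graph $D_d\rightrightarrows D_1\rightrightarrows D_0$ is a $2$-category, so you may as well take $d=2$ and avoid the ``direct but somewhat delicate computation'' in arbitrary dimension.
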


\begin{proof}
By induction, it suffices to show that for every $i > j > k \ge 0$, if $C$ admits
$\comp^j_k$-inverses, then $C$ admits $\comp^i_k$-inverses if and only if it
admits $\comp^i_j$-inverses. By using the fact that the $2$-graph
\[
\xymatrix{
C_i \ar@<.6ex>[r]^-{\Gls[i]{j}} \ar@<-.6ex>[r]_-{\Glt[i]{j}} &
C_j \ar@<.6ex>[r]^-{\Gls[j]{k}} \ar@<-.6ex>[r]_-{\Glt[j]{k}} & 
C_k
}
\]
has a natural structure of $2$-category, one can assume that $k = 0$, $j =
1$ and $i = 2$. The result is thus a consequence of the following lemma.
\end{proof}

\begin{lemma}
Let $C$ be a $2$-category whose $1$-arrows are invertible. Then a $2$-arrow is
invertible for horizontal composition (i.e., $\comp^2_0$) if and only if it is invertible
for vertical composition (i.e., $\comp^2_1$).
\end{lemma}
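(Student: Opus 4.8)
The plan is to pass between the two kinds of inverse by whiskering a given inverse with the (unique) inverses of the bounding $1$-arrows, the exchange law providing the verification in both directions. Write $\alpha$ for the $2$-arrow in question, with $\Gls{1}(\alpha) = f$ and $\Glt{1}(\alpha) = g$, so that $f$ and $g$ are parallel $1$-arrows $x \to y$; by hypothesis they admit inverses $f^{-1}, g^{-1} \colon y \to x$. A preliminary remark pins down the shape of any horizontal inverse: if $v$ is a $\comp_0$-inverse of $\alpha$, then reading the equations $\alpha \comp_0 v = 1_{1_y}$ and $v \comp_0 \alpha = 1_{1_x}$ on source and target $1$-arrows forces $f \comp_0 \Gls{1}(v) = 1_y$ and $g \comp_0 \Glt{1}(v) = 1_y$, whence by uniqueness of $1$-arrow inverses $\Gls{1}(v) = f^{-1}$ and $\Glt{1}(v) = g^{-1}$. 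So in either direction we know exactly which parallel pair the inverse lies between, and whiskering by $f^{-1}, g^{-1}$ (resp.\ by $f, g$) carries $2$-arrows $g \Rightarrow f$ to $2$-arrows $f^{-1} \Rightarrow g^{-1}$ and back; these two whiskerings are mutually inverse, since composing them inserts $1_{f^{-1} \comp_0 f} = 1_{1_x}$ and $1_{g \comp_0 g^{-1}} = 1_{1_y}$, which act as units.

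For the implication from vertical to horizontal invertibility, I would take the vertical inverse $\alpha^{-1} \colon g \Rightarrow f$ and set $\bar\alpha := 1_{g^{-1}} \comp_0 \alpha^{-1} \comp_0 1_{f^{-1}}$, a $2$-arrow $f^{-1} \Rightarrow g^{-1}$, then check that it is a two-sided $\comp_0$-inverse of $\alpha$. The computation rests on rewriting the horizontal composite of two genuine $2$-arrows as a vertical composite of whiskerings, via the exchange law together with the unit axioms: for composable $2$-arrows one has
\[ \beta \comp_0 \gamma = (\beta \comp_0 1_{\Glt{1}(\gamma)}) \comp_1 (1_{\Gls{1}(\beta)} \comp_0 \gamma) = (1_{\Glt{1}(\beta)} \comp_0 \gamma) \comp_1 (\beta \comp_0 1_{\Gls{1}(\gamma)}). \]
Applying this to $\alpha \comp_0 \bar\alpha$, the outer whiskers by $f^{-1}$ and $g^{-1}$ recombine with $f$ and $g$ into identity $2$-arrows by functoriality of units and $1$-arrow invertibility, and the surviving middle factor collapses using $\alpha \comp_1 \alpha^{-1} = 1_g$; symmetrically $\bar\alpha \comp_0 \alpha = 1_{1_x}$ follows from $\alpha^{-1} \comp_1 \alpha = 1_f$.

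For the converse, given a $\comp_0$-inverse $v \colon f^{-1} \Rightarrow g^{-1}$ I would set $\alpha^{-1} := 1_g \comp_0 v \comp_0 1_f$, a $2$-arrow $g \Rightarrow f$, and verify $\alpha \comp_1 \alpha^{-1} = 1_g$ and $\alpha^{-1} \comp_1 \alpha = 1_f$ by the same exchange-law manipulation, now feeding in $\alpha \comp_0 v = 1_{1_y}$ and $v \comp_0 \alpha = 1_{1_x}$. Because the two whiskering assignments are mutually inverse, this is genuinely the reverse of the previous construction. The only real work, and the step I expect to be the main obstacle, is the exchange-law bookkeeping in the two verifications: one must unfold each horizontal composite into an iterated vertical composite and keep careful track of the unit and inverse $1$-arrows so that everything telescopes. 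Once this rewriting is organized cleanly, both directions are formal.
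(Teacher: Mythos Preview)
Your proposal is correct and follows exactly the same approach as the paper: in both directions you whisker the given inverse by the (inverse) bounding $1$-arrows, which is precisely what the paper does with the formulas $v \comp_0 a^\ast \comp_0 u$ and $v^{-1} \comp_0 a^{-1} \comp_0 u^{-1}$. The paper simply states these formulas and leaves the exchange-law verification implicit, whereas you spell it out; the content is the same.
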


\begin{proof}
Let $a \colon u \to v$ be a $2$-arrow. Suppose $a$ admits a horizontal inverse
$a^\ast$. Then $v \comp_0 a^\ast \comp_0 u$ is a vertical
inverse. Conversely, suppose $a$ admits a vertical inverse~$a^{-1}$. Then
$v^{-1} \comp_0 a^{-1} \comp_0 u^{-1}$ is a horizontal inverse.
\end{proof}

\subsection{Weak equivalences of strict $\infty$-groupoids}

Let $G$ be a strict \oo-groupoid. An $n$-arrow $u$ of $G$ is
\ndef{homotopic} to another $n$-arrow $v$ if there exists an $(n+1)$-arrow
from $u$ to $v$. This implies that the arrows $u$ and $v$ are parallel. If
$u$ is homotopic to $v$, we will write $u \sim v$. The relation $\sim$ is an
equivalence relation on $G_n$: the properties with respect to source and
target of the maps $\Glk{n+1}$,
$\Glw[n]{n-1}$ and $\comp^n_{n-1}$ imply respectively that $\sim$ is
reflexive, symmetric and transitive. 

Let us denote by $\overline{G_n}$ the quotient of $G_n$ by $\sim$. The
composition
\[ \comp^{n}_{n-1} \colon G_n
\times_{G_{n-1}} G_n \to G_n
\]
induces a map
\[
\comp^{n}_{n-1} \colon \overline{G_n} \times_{G_{n-1}} \overline{G_n} \to
\overline{G_n},
\]
thanks to the properties with respect to source and target of the composition
$\comp^{n+1}_{n-1}$. For $n \ge 1$, we can thus define a groupoid
$\varpi_n(G)$ whose objects are $(n-1)$-arrows of $G$ and whose morphisms
are elements of $\overline{G_n}$. It is clear that $\varpi_n$ defines a
functor from the category of strict \oo-groupoids to the category of groupoids.

\begin{definition}
The set of \ndef{connected components} of $G$ is
\[ \pi_0(G) = \pi_0(\varpi_1(G)) = \overline{G_0}. \]
For $n \ge 1$ and $x$ an object of $G$, the \ndef{$n$-th homotopy group of
$G$ at $x$} is
\[ \pi_n(G, x) = \pi_1(\varpi_n(G), \Glid{x}) =
\Aut_{\varpi_n(G)}(\Glid{x}). \]
\end{definition}

By functoriality of the $\varpi_n$'s, $\pi_0$ induces a functor from the
category of strict \oo-groupoids to the category of sets, and $\pi_n$, for $n \ge 1$,
induces a functor from the category of pointed strict \oo-groupoids to the category
of groups. By the Eckmann-Hilton argument, the groups $\pi_n(G,
x)$ are abelian for $n \ge 2$.
More generally, if $u$ and $v$ are two $(n-1)$-arrows for $n \ge 1$ we set
\[
\pi_n(G, u, v) = \dHom_{\varpi_n(G)}(u, v)
\quad\text{and}\quad
\pi_n(G, u) = \pi_n(G, u, u).
\]

\begin{definition}
A morphism $f \colon G \to H$ of strict \oo-groupoids is a \ndef{weak equivalence
of strict \oo-groupoids} if the map $\pi_0(f) \colon \pi_0(G) \to \pi_0(H)$ is a
bijection and if for all $n \ge 1$ and all object $x$ of $G$, the morphism
$\pi_n(f, x) \colon \pi_n(G, x) \to \pi_n(H, f(x))$ is a group isomorphism.
We will denote by $\Wgr$ the class of such weak equivalences.
\end{definition}

\begin{proposition}\label{prop:def_w_eq}
Let $f \colon G \to H$ be a morphism of strict \oo-groupoids. The following conditions are
equivalent:
\begin{enumerate}
\item\label{item:w_eq} $f$ is a weak equivalence of strict \oo-groupoids;
\item $\pi_0(f) \colon \pi_0(G) \to \pi_0(H)$ is a bijection and for all $n \ge
1$ and every $(n-1)$-arrow $u$ of~$G$, $f$ induces a bijection
\[ \pi_n(G, u) \to \pi_n(H, f(u))\text{;} \]
\item $\varpi_1(f) \colon \varpi_1(G) \to \varpi_1(H)$ is an equivalence of categories and
for all $n \ge 2$ and every pair $(u, v)$ of parallel $(n-1)$-arrows
  of~$G$, $f$
 induces a bijection
 \[ \pi_n(G, u, v) \to \pi_n(H, f(u), f(v))\text{;} \]
\item\label{item:folk_w_eq} $\varpi_1(f) \colon \varpi_1(G) \to \varpi_1(H)$ is full and essentially surjective,
and for all $n \ge 2$ and every pair $(u, v)$ of parallel $(n-1)$-arrows
of $G$, $f$ induces a surjection
\[ \pi_n(G, u, v) \to \pi_n(H, f(u), f(v))\text{.} \]
\end{enumerate}
\end{proposition}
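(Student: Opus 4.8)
The plan is to prove the four conditions equivalent by showing a cycle of implications, exploiting the fact that all four involve the groupoids $\varpi_n(G)$ and the functors $\varpi_n(f)$ they induce. The key observation is that for a functor between groupoids, the data encoded by conditions on $\pi_0$, $\pi_1$ and hom-sets can be translated into standard groupoid-theoretic language (bijectivity on $\pi_0$, faithfulness, fullness, essential surjectivity), so most of the work is bookkeeping that relates the "based" formulation to the "unbased" one.

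First I would record the translation dictionary. For a groupoid $\Gamma$, the set $\pi_0(\varpi_1(\Gamma))$ is the set of isomorphism classes, $\Aut_{\varpi_n(\Gamma)}(\Glid{x})$ is $\pi_n(G,x)$, and the hom-set $\dHom_{\varpi_n(G)}(u,v)$ is $\pi_n(G,u,v)$, which is a torsor under $\pi_n(G,u)$ whenever it is nonempty. The crucial elementary fact, used repeatedly, is that for a functor $\varphi$ between groupoids the conditions "$\varphi$ is an equivalence of categories'', "$\varphi$ is fully faithful and essentially surjective'', and "$\varphi$ is bijective on isomorphism classes and induces isomorphisms on all automorphism groups'' all coincide; similarly, once $\varphi$ is essentially surjective, being full (resp. faithful, resp. bijective on hom-sets) can be checked on hom-sets between any chosen representatives or between all parallel pairs, since composing with isomorphisms of $\Gamma$ transports one hom-set bijectively onto another.

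Next I would carry out the cycle. For \eqref{item:w_eq} $\Leftrightarrow$ (2), the point is that $\pi_n(G,x)$ is by definition $\pi_n(G,\Glid{x},\Glid{x})$ for an object $x$, and since $\varpi_1(G)$ is a groupoid every $(n-1)$-arrow $u$ is isomorphic in $\varpi_{n-1}(G)$ to some iterated identity; transporting along such an isomorphism identifies $\pi_n(G,u)$ with $\pi_n(G,\Glid{x})$ compatibly with $f$, so bijectivity for all $x$ is equivalent to bijectivity for all $u$. For (2) $\Leftrightarrow$ (3), I would unwind $\varpi_1(f)$ being an equivalence into the conjunction "$\pi_0(f)$ bijective and $\pi_1(f,u)$ bijective for all $u$'', using the dictionary above at level one; then for $n \ge 2$ the bijection $\pi_n(G,u)\to\pi_n(H,f(u))$ together with bijectivity on $\pi_0$ of $\varpi_n$ upgrades to the statement about parallel pairs $(u,v)$, because a nonempty $\pi_n(G,u,v)$ is a torsor under $\pi_n(G,u)$ and the empty/nonempty dichotomy is controlled by whether $u$ and $v$ become equal in $\pi_0(\varpi_n(G))$, i.e.\ by the $\pi_1$-level data already assumed. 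Finally (3) $\Leftrightarrow$ \eqref{item:folk_w_eq} is where I would replace "equivalence of categories'' by "full and essentially surjective'' and "bijection'' by "surjection''; here I must check that, for a functor of groupoids, surjectivity on all hom-sets plus essential surjectivity forces faithfulness for free.

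The main obstacle I expect is precisely this last implication \eqref{item:folk_w_eq} $\Rightarrow$ (3): proving that a functor of groupoids which is full, essentially surjective, and surjective on all the higher hom-sets $\pi_n(G,u,v)$ is automatically injective on them as well. The mechanism should be a descending induction using the groupoid structure: if two $n$-arrows $a,a'\colon u\to v$ have $f(a)=f(a')$, then $a \comp_{n-1} \Glinv{a'}$ is an $n$-automorphism of $u$ sent by $f$ to an identity, so injectivity at level $n$ reduces to the claim that $f$ kills no nontrivial element of $\pi_n(G,u)$; this must be extracted from surjectivity one dimension up, namely from the assumed surjectivity of $\pi_{n+1}(G,a,a)\to\pi_{n+1}(H,f(a),f(a))$ interpreted via $\varpi_{n+1}$, together with the torsor structure. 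Making this reduction precise — identifying which higher surjectivity hypothesis forces the needed lower-dimensional injectivity, and checking the base case $n=2$ against the fullness and essential surjectivity of $\varpi_1(f)$ — is the delicate part, and I would isolate it as a separate lemma about functors between strict \oo-groupoids before assembling the full equivalence.
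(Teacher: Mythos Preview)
Your overall plan matches the paper's: the same chain $1\Rightarrow 2\Rightarrow 3$ together with $3\Leftrightarrow 4$, via the same torsor/transport ideas. Two points deserve correction, though.

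First, in your $1\Leftrightarrow 2$ step, the sentence ``every $(n-1)$-arrow $u$ is isomorphic in $\varpi_{n-1}(G)$ to some iterated identity'' is not right: in $\varpi_{n-1}(G)$ the arrow $u$ is a morphism, not an object, and even in $\varpi_n(G)$ (where $u$ is an object) it is generally not parallel to any iterated identity, so no isomorphism can exist. The paper does not transport along an isomorphism in any $\varpi_k$; it writes down directly the map $\pi_n(G,x)\to\pi_n(G,u)$, $a\mapsto 1_u\comp_0 a$ with $x=s_0(u)$, checks it is a group isomorphism, and observes that it commutes with $f$. You should replace your transport argument by this explicit formula.

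Second, you are making $4\Rightarrow 3$ harder than it is. There is no descending induction and no reduction to automorphisms. If $a,b\colon u\to v$ satisfy $f(a)\sim f(b)$, then by definition there is an $(n+1)$-arrow $f(a)\to f(b)$ in $H$; the hypothesis of \eqref{item:folk_w_eq} gives surjectivity of $\pi_{n+1}(G,a,b)\to\pi_{n+1}(H,f(a),f(b))$ (note: the pair $(a,b)$, not $(a,a)$ as you wrote), hence an $(n+1)$-arrow $a\to b$ in $G$, so $a\sim b$. That is the whole argument, uniformly in $n\ge 1$; the case $n=1$ is exactly faithfulness of $\varpi_1(f)$, and no separate base case is needed.
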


\begin{proof}
$1 \Rightarrow 2$)
The case $n = 1$ is obvious. Let $n \ge 2$ and let $u$ be an $(n-1)$-arrow
of~$G$. Set $x = \Gls{0}(u)$. 
The map
\[ \pi_{n}(G,x) \to \pi_n(G, u) \]
which sends an $n$-arrow $a \colon \Glid{x} \to \Glid{x}$
to the $n$-arrow $1_u \comp_0 a  \colon u \to u$, is an isomorphism.
Moreover $f$ commutes with this isomorphism, that is the square
\[
\xymatrix{
\pi_n(G, x) \ar[d] \ar[r] & \pi_n(G, u) \ar[d] \\
\pi_n(H, f(x))  \ar[r] & \pi_n(H, f(u)) \\
}
\]
is commutative. The map $\pi_n(G, u) \to \pi_n(H, f(u))$ is thus a bijection
for $n \ge 2$.

$2 \Rightarrow 3$) 
Let $n \ge 1$ and let $u, v$ be two parallel $(n-1)$-arrows of $G$. Suppose there
exists an $n$-arrow $a \colon u \to v$ in $G$. The map 
\[ \pi_n(G, u) \to \pi_n(G, u, v) \]
which sends an $n$-arrow $b \colon u \to u$ to the $n$-arrow
$a \comp_{n-1} b \colon u \to v$, is a bijection. Moreover $f$ commutes with this
bijection, that is the square
\[
\xymatrix{
\pi_n(G, u) \ar[d] \ar[r] & \pi_n(G, u, v) \ar[d] \\
\pi_n(H, f(u))  \ar[r] & \pi_n(H, f(u), f(v)) \\
}
\]
is commutative.

Thus to conclude it suffices to show that if there exists an $n$-arrow
$b \colon f(u) \to f(v)$ in $H$, then there exists an $n$-arrow $a \colon u \to v$ in
$G$. It is clear when $n = 1$ by injectivity of $\pi_0(f)$. Let $n \ge 2$
and let $b \colon f(u) \to f(v)$ be an $n$-arrow of $H$. Set $x = \Gls{n-2}(u)$. The
arrow $\Glid{\Glinv{f(u)}} \comp_{n-2} b$  is an $n$-arrow of $H$
from $\Glid{f(x)} \colon f(x) \to f(x)$ to 
$\Glinv{f(u)} \comp_{n-2} f(v) \colon f(x) \to f(x)$. Since the map
\[ \pi_{n-1}(G, x) \to \pi_{n-1}(H, f(x)) \]
is injective, there exists an $n$-arrow $a'$ of $G$ from
$\Glid{x}$ to  $\Glinv{u} \comp_{n-2} v$.
Then $a = \Glid{u} \comp_{n-2} a'$ is an $n$-arrow of $G$ from
$u$ to $v$.

$3 \Rightarrow 1$) Obvious.

$4 \Rightarrow 3$)
Let $n \ge 1$, let $u,v$ be two parallel $(n-1)$-arrows of $G$ and let $a, b$
be two $n$-arrows from $u$ to $v$.
Suppose we have $f(a) = f(b)$ in $\pi_n(H, f(u), f(v))$. Then there exists
an $(n+1)$-arrow of $H$ from $f(a)$ to $f(b)$. By
surjectivity of the map
\[ \pi_{n+1}(G, a, b) \to \pi_{n+1}(H, f(a), f(b)), \]
there exists an $(n+1)$-arrow in $G$ from $a$ to $b$.
Thus $a = b$ in $\pi_n(G, u, v)$.

$3 \Rightarrow 4$) Obvious.

\end{proof}

\section{The Brown-Golasi\'nski model structure}\label{sec:brown}

In \cite{browngolasinski:modscc}, Brown and Golasi\'nski introduce a model
category structure on the category of crossed complexes. By the equivalence
of categories between crossed complexes and strict \oo-groupoids constructed
in \cite{brownhiggins:grpcc}, this model structure induces a model structure
on strict \oo-groupoids. The purpose of this section is to describe this
model structure.

\subsection{Crossed complexes}

Let us denote by $\Grp$ the category of groups and by
$\CG$ the category of homological complexes of (not necessarily
commutative) groups in dimension greater or equal to $2$, that is of
sequences of morphisms of groups
\[ \dots \to C_n \xrightarrow{d_n} C_{n-1} \to \dots \to C_3 \xrightarrow{d_3} C_2 \]
such that for every $n \ge 4$, we have $d_{n-1}d_n = 1$, where $1$ denotes
the unit element of $C_{n-2}$. We have an inclusion
functor $i_2 \colon \Grp \to \CG$ which sends a group $G$ to the complex
concentrated in degree $2$ on $G$.

Let $C_{\le 1}$ be a groupoid. We will denote by $C_0$ its set of objects
and by $C_1(x, y)$ the set of morphisms from an object $x$ to an object $y$ in $C_{\le 1}$.
Let $C_1 \colon C_{\le 1} \to \Grp$ be the functor defined in the following way:
an object $x$ of $C_{\le 1}$ is sent to the group $C_1(x) = C_1(x, x)$; 
a morphism $u \colon x \to y$ of $C_{\le 1}$ is sent to the morphism of
conjugation by $u$, i.e.,
\[
\begin{split}
C_1(x) & \to C_1(y) \\
v & \mapsto uvu^{-1}.
\end{split}
\]

\begin{definition}
A \ndef{precrossed complex} consists of
\begin{itemize}
\item a groupoid $C_{\le 1}\text{;}$
\item a functor $C_{\ge 2} \colon C_{\le 1} \to \CG\text{;}$
\item an augmentation of $C_{\ge 2}$ over $C_1$, that is a natural
transformation $d_2 \colon C_{\ge 2} \to i_2C_1$.
\end{itemize}
\end{definition}

\noindent
Explicitly, a precrossed complex is given by 
\begin{itemize}
\item for every $x$ in $C_0$, a complex
\[ C_{\ge 2}(x) = \dots \to C_n(x) \xrightarrow{d_n} C_{n-1}(x) \to \dots \to C_3(x)
\xrightarrow{d_3} C_2(x)\text{;} \]
\item for every $x$ in $C_0$, a morphism $d_2 \colon C_2(x) \to C_1(x)\text{;}$
\item for every $n \ge 2$ and every morphism $u \colon x \to y$ of $C_{\le 1}$, a morphism $C_n(x) \to C_n(y)$
of groups
functorial in $u$,
\end{itemize}
such that for every morphism $u \colon x \to y$ of $C_{\le 1}$, the diagram
\[
\xymatrix{
  \ar[r] & C_n(x) \ar[d] \ar[r]^-{d_n} & C_{n-1}(x) \ar[d]
  \ar[r] & \cdots \ar[r] & C_2(x) \ar[r]^{d_2} \ar[d] & C_1(x) \ar[d] \\
  \ar[r] & C_n(y) \ar[r]^-{d_n} & C_{n-1}(y) \ar[r] & \cdots
  \ar[r] & C_2(y) \ar[r]^{d_2} & C_1(y)\text{,} \\
}
\]
where $C_1(x) \to C_1(y)$ is the conjugation by $u$, is commutative.

If $C$ is a precrossed complex we will denote by $C(x)$ the augmented
complex 
\[\dots \to C_n(x) \xrightarrow{d_n} C_{n-1}(x) \to \dots \to C_3(x)
\xrightarrow{d_3} C_2(x) \xrightarrow{d_2} C_1(x) \text{.} \]
If $u \colon x \to y$ is a morphism of $C_{\le 1}$ and $n \ge 1$, we will call
the map $C_n(x) \to C_n(y)$ the \ndef{action} of $u$ on $C_n(x)$. In
particular, an element of $C_1(x)$ \ndef{acts} on $C_n(x)$ for all $n \ge
1$.

\begin{definition}
A precrossed complex $C$ is a \ndef{crossed complex} if for all $x$ in
$C_0$ the following conditions are satisfied:
\begin{itemize}
\item for every $n \ge 3$, the group $C_n(x)$ is abelian;
\item for every $u$ in $C_2(x)$, the element
$d_2(u)$ of $C_1(x)$ acts
\begin{itemize}
\item by conjugation by $u$ on $C_2(x)\text{;}$
\item trivially on $C_n(x)$ for $n \ge 3$.
\end{itemize}
\end{itemize}
\end{definition}

\begin{definition}
Let $C$ and $D$ be two crossed complexes.
A \ndef{morphism of crossed complexes} $f \colon C \to D$ consists of
\begin{itemize}
\item a functor $f_{\le 1}\colon C_{\le 1} \to D_{\le 1}\text{;}$
\item a natural transformation $f_{\ge 2} \colon C_{\ge 2} \to D_{\ge
2}f_{\le 1}\text{,}$
\end{itemize}
such that $f_{\ge 2}$ is compatible with the augmentation, i.e.,
the square
\[
\xymatrix{
  C_{\ge 2} \ar[d]_{d_2} \ar[r] & D_{\ge 2}f_{\le 1} \ar[d]^{d_2 f_{\le 1}} \\
  i_2C_1 \ar[r] & i_2D_1f_{\le 1} \\
}
\]
is commutative.
\end{definition}

\noindent
Explicitly, a morphism $f \colon C \to D$ is given by
\begin{itemize}
\item a functor $f_{\le 1}\colon C_{\le 1} \to D_{\le 1}\text{;}$
\item for every $n \ge 2$ and every $x$ in $C_0$, a group morphism
$f_n \colon C_n(x) \to D_n(f_0(x))$, where $f_0$ denotes the restriction of $f_{\le 1}$
to objects,
\end{itemize}
such that for every $x$ in $C_0$, the diagram
\[
\xymatrix{
  \ar[r] & C_n(x) \ar[d]_{f_n} \ar[r]^-{d_n} & C_{n-1}(x) \ar[d]_{f_{n-1}}
  \ar[r] & \cdots \ar[r] & C_1(x) \ar[d]_{f_1} \\
  \ar[r] & D_n(f_0(x)) \ar[r]^-{d_n} & D_{n-1}(f_0(x)) \ar[r] & \cdots
  \ar[r] & D_1(f_0(x)) \text{,} \\
}
\]
where $f_1$ denotes the restriction of $f_{\le 1}$ to $C_1(x)$, is
commutative.
We will often simply write $f$ for $f_n$.

We will denote the category of crossed complexes by $\CrC$ .

\subsection{The equivalence with strict $\infty$-groupoids}\label{sec:equiv_grp_cc}
Let $G$ be a strict \oo-groupoid. One can associate to $G$ a precrossed complex $C$ in the
following way:
\begin{itemize}
\item the groupoid $C_{\le 1}$ is the $1$-truncation of $G$ 
obtained from $G$ by throwing out the $n$-arrows for $n \ge 2\text{;}$
\item $C_n(x)$ is the set of $n$-arrows $u$ of $G$ whose source is an
iterated unit of an object, that is such that there exists an object $x$ of
$G$ such that $\Gls{n-1}(u) = \Glid{x}\text{;}$
\item the group law on $C_n(x)$ is induced by the composition 
 $\comp_0^n \colon G_n \times_{G_0} G_n \to G_n\text{;}$
\item $d_n \colon C_n(x) \to C_{n-1}(x)$ is induced by the target map $\Glt{n-1} \colon
G_n \to G_{n-1}\text{;}$
\item if $u \colon x \to y$ is a morphism in $C_{\le 1}$ and $v$ a morphism in
$C_n(x)$ for $n \ge 2$, the action of $u$ on $v$ is
\[ u \comp_0 v \comp_0 \Glw{0}(u). \]
\end{itemize}
This precrossed complex is a crossed complex (see paragraph 3 of
\cite{brownhiggins:grpcc}). Moreover this construction defines a functor $\A
\colon \wgrp \to \CrC$.

\begin{theorem}[Brown-Higgins]
The functor $\A \colon \wgrp \to \CrC$ is an equivalence of categories.
\end{theorem}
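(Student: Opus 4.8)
The plan is to exhibit a quasi-inverse $\Lambda \colon \CrC \to \wgrp$ to $\A$ and to produce natural isomorphisms $\A\Lambda \cong \id_{\CrC}$ and $\Lambda\A \cong \id_{\wgrp}$. The construction of $\Lambda$ rests on a single structural observation about strict \oo-groupoids: for $n \ge 2$, every $n$-arrow $a$ with source $u = \Gls{n-1}(a)$ is uniquely recovered from the pair consisting of $u$ together with the \emph{based} arrow
\[ \langle a \rangle = \Glid{\Glw[n-1]{0}(u)} \comp_0 a, \]
whose $(n-1)$-source is the iterated unit $\Glid{x}$ on $x = \Gls{0}(a)$, so that $\langle a\rangle$ lies in $\A(G)_n = C_n(x)$; the inverse assignment sends $c$ to $\Glid{u} \comp_0 c$, and the two assignments are mutually inverse by the groupoid, unit, and functoriality-of-units axioms. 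This yields a natural bijection $G_n \cong \{(u,c) : u \in G_{n-1},\ c \in C_n(\Gls{0}(u))\}$, and it is precisely this bijection that we turn into a definition.

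Accordingly, I would set $\Lambda(C)_0 = C_0$, take $\Lambda(C)_1$ to be the arrows of the groupoid $C_{\le 1}$, and for $n \ge 2$ put
\[ \Lambda(C)_n = \{(u, c) : u \in \Lambda(C)_{n-1},\ c \in C_n(\Gls{0}(u))\}. \]
The source of $(u,c)$ is $u$; its target is $u \comp_0 d_n(c)$, read off from $u$ and the boundary $d_n(c)$ through the groupoid structure (mimicking $\Glt{n-1}(\Glid{u} \comp_0 c)$); units are $(u, 1)$; and the operations $\comp^i_j$ and $\Glk{i}$ are defined by transporting the corresponding operations of a strict \oo-groupoid through the bijection above. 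The real content is to express each composite $\comp_j$ purely in terms of the group laws $\comp_0$ on the $C_n$, the boundaries $d_n$, and the conjugation action of $C_1$: the based representative of a whiskered composite is computed via the action of a $1$-arrow, and that of a $\comp_j$-composite for $j \ge 1$ via a product in $C_n$. This is exactly where the defining conditions of a crossed complex enter — the abelianness of $C_n$ for $n \ge 3$, the conjugation rule for the action of $d_2(u)$ on $C_2$, and the triviality of that action in higher dimensions are the precise identities that make the transported operations well defined and single-valued.

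With $\Lambda$ in hand the remaining work splits in two. First, one checks that $\Lambda(C)$ satisfies the four \oo-category axioms (associativity, exchange, units, functoriality of units) and admits all $\comp^i_j$-inverses; by the characterization of strict \oo-groupoids established above it suffices to produce $\comp^i_{i-1}$-inverses, which are assembled from the group inverses in the $C_n$ and the groupoid inverses in $C_{\le 1}$. Each axiom reduces, through the bijection, to an identity among group multiplications, boundaries, and actions guaranteed by the crossed-complex conditions. Functoriality of $\Lambda$ is immediate, since a morphism of crossed complexes is exactly a compatible family of maps on the $C_n$ over a functor of $C_{\le 1}$. Second, one builds the two natural isomorphisms: $\A\Lambda(C) \cong C$ is the identity on $C_{\le 1}$ and on each $C_n$ by construction, while $\Lambda\A(G) \cong G$ is, dimension by dimension, the bijection $(u,c) \mapsto \Glid{u} \comp_0 c$ recorded in the first paragraph, whose compatibility with source, target, units, and all compositions is a further application of the same folding identities.

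The main obstacle is the verification in the second step: translating every higher composition $\comp^i_j$ into crossed-complex language and confirming that the \oo-category axioms survive the transport. Concretely, the exchange law relating $\comp_j$ and $\comp_k$ becomes a commutation-and-conjugation identity in the groups $C_n$, and it is here that one sees the three crossed-complex conditions are not merely sufficient but genuinely forced — this is, after all, the very reason the notion of crossed complex is tailored to strict \oo-groupoids. Once the compositions are correctly rewritten in terms of $\comp_0$, the $d_n$, and the $C_1$-action, the axioms follow by direct if unenlightening computation, and the two natural isomorphisms are then formal.
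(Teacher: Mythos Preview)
The paper does not actually prove this theorem: its entire proof is a citation to Theorem~4.1 of Brown--Higgins. So there is no ``paper's own proof'' to compare your argument against beyond the original source.

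Your outline is the standard one, and is essentially the strategy Brown and Higgins themselves follow: decompose each $n$-arrow $a$ of a strict \oo-groupoid as a pair consisting of its source $u$ and a ``based'' representative $\langle a\rangle$ whose $(n-1)$-source is an iterated identity, then reverse-engineer this decomposition into a definition of $\Lambda(C)$ on an arbitrary crossed complex. The key bijection $a \leftrightarrow (u,\langle a\rangle)$ is correct, and you are right that the crossed-complex axioms (abelianness in degrees $\ge 3$, the Peiffer identity in degree $2$, triviality of the $d_2$-image action in higher degrees) are exactly what is needed to make the transported compositions single-valued and associative.

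Two points where your write-up is looser than a proof would need to be. First, the phrase ``transporting the corresponding operations of a strict \oo-groupoid through the bijection above'' is slightly circular: when defining $\Lambda(C)$ there is no ambient \oo-groupoid yet, so what you must do is write down explicit formulas for $\comp^i_j$ in terms of the group products, boundaries, and $C_1$-action, guided by what the bijection would say in the image of $\A$, and then verify the axioms directly for those formulas. Second, your target formula ``$u \comp_0 d_n(c)$'' does not type-check as written, since $u$ is a formal pair in $\Lambda(C)_{n-1}$ while $d_n(c)$ lives in $C_{n-1}(x)$; the correct recursion is that if $u = (u',c')$ then the target of $(u,c)$ is $(u',\, c' \cdot d_n(c))$ with the product taken in $C_{n-1}(x)$. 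These are matters of precision rather than substance; the underlying idea is sound, and the ``unenlightening computation'' you allude to at the end is genuinely the bulk of the work in the original paper.
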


\begin{proof}
See Theorem 4.1 of \cite{brownhiggins:grpcc}.
\end{proof}

\subsection{The model structure on crossed complexes}

\smallbreak
 
\begin{definition}
Let $C$ be a crossed complex. The set of \ndef{connected components} of $C$
is
\[
 \pi_0(C) = \pi_0(C_{\le 1}).
\]
For $x$ in $C_0$, the \ndef{fundamental group of $C$ at $x$} is
\[ \pi_1(C, x) = \Coker(d_2 \colon C_2(x) \to C_1(x)), \]
and for $n \ge 2$, the \ndef{$n$-th homotopy group of $C$ at $x$} is
\[ \pi_n(C, x) = H_n(C(x)). \]
\end{definition}

It is clear that $\pi_0$ defines a functor from the category of crossed
complexes to the category of sets and that for $n \ge 1$, $\pi_n$ defines a
functor from the category of pointed crossed complexes to the category of
groups.

\begin{definition}
A morphism $f \colon C \to D$ of crossed complexes is a \ndef{weak equivalence of
crossed complexes}
(see \cite{browngolasinski:modscc})
if the map $\pi_0(f) \colon \pi_0(C) \to \pi_0(D)$ is a bijection and if for every
$x$ in $C_0$ and every $n \ge 1$, the morphism $\pi_n(f, x) \colon \pi_n(C, x)
\to \pi_n(D, f(x))$ is an isomorphism.
\end{definition}

\begin{definition}
Let $f \colon C \to D$ be a morphism of crossed complexes. The morphism $f$ is a
\ndef{trivial fibration of crossed complexes} (see Proposition 2.2 (ii) of
\cite{browngolasinski:modscc}) if the following conditions are satisfied:
\begin{itemize}
\item for every $y$ in $D_0$, there exists $x$ in $C_0$ such that $f(x) =
y$\text{;}
\item for every $x$, $y$ in $C_0$ and every morphism $v \colon f(x) \to
f(y)$ in $D_{\le 1}$, there exists a morphism $u \colon x \to y$ in $C_{\le
1}$ such that $f(u) = v$\text{;}
\item  for every $n \ge 2$, $x$ in $C_0$, $t$ in $C_{n-1}(x)$ and every
$v$ in $D_n(f(x))$ such that $d_n(v) = f(t)$, there exists
$u$ in $C_n(x)$ such that $d_n(u) = t$ and $f(u) = v$.
\end{itemize}
\end{definition}

\begin{theorem}[Brown-Golasi\'nski]
The weak equivalences and trivial fibrations of crossed complexes define a model
category structure on the category of crossed complexes.
\end{theorem}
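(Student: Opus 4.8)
The plan is to recognize this structure as cofibrantly generated, exploiting the fact that $\CrC$ is locally presentable: crossed complexes are the models of a limit sketch (equivalently, the algebras of a finitary monad over a presheaf category), so by Theorem 1.46 of \cite{adamekrosicky:locpac} the category $\CrC$ is locally presentable, hence complete and cocomplete, which secures all the limits and colimits a model structure requires. I take $W$ to be the class of weak equivalences of crossed complexes. That $W$ satisfies the two-out-of-three property and is closed under retracts is immediate from the functoriality of $\pi_0$ and of the $\pi_n$, together with the corresponding stability of bijections and group isomorphisms.

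The key is to produce a set $I$ of generating cofibrations whose class $\inj{I}$ of injectives is exactly the trivial fibrations. I would define $I$ dimension by dimension: the inclusion of the empty crossed complex into the crossed complex on a single object; a map adjoining a $1$-arrow between two prescribed objects; and, for each $n \ge 2$, the inclusion of the free crossed complex on an $(n-1)$-cell into the free crossed complex on an $n$-cell whose boundary is that $(n-1)$-cell. By adjunction, a morphism $f$ lies in $\inj{I}$ precisely when it satisfies the three surjectivity-with-boundary conditions defining a trivial fibration, so $\inj{I}$ is the class of trivial fibrations. A short computation then shows $\inj{I} \subseteq W$: object- and $1$-arrow-surjectivity force $\pi_0(f)$ to be bijective, while the cell-lifting condition in dimensions $n$ and $n+1$ yields both surjectivity and injectivity of each $\pi_n(f,x)$; hence every trivial fibration is a weak equivalence.

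To invoke the recognition theorem for cofibrantly generated model categories (Smith's theorem), two further points remain. First, $W$ must be an accessible, accessibly embedded, retract-closed subcategory of the arrow category $\CrC^{\rightarrow}$; this holds because each $\pi_n$ is built from kernels, cokernels and images and is therefore an accessible functor, and \emph{being an isomorphism} is an accessible condition, so $W$ is cut out by accessible conditions and the requisite solution-set condition is satisfied. The factorizations are then produced by the small object argument, which applies since every object of the locally presentable category $\CrC$ is small. Second, and crucially, the class $\Icof{I} \cap W$ of trivial cofibrations must be closed under pushout and transfinite composition.

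This last point is the main obstacle. The cleanest route is to exhibit an explicit set $J$ of generating trivial cofibrations, namely the cylinder inclusions that adjoin to the free $n$-disk an $(n+1)$-cell witnessing that its generating $n$-cell is homotopic to an identity, and then to prove the two containments $\Icell{J} \subseteq \Icof{I} \cap W$ and $\inj{J} \cap W = \inj{I}$. The first is a homotopy-invariance statement — attaching such a cylinder cell alters none of the $\pi_n$ — and is where the real work lies, since it demands a genuine analysis of how the homotopy groups of a crossed complex behave under a free cell attachment and under transfinite composition; the second is a lifting computation. A standard retract argument then identifies $\Icof{I} \cap W$ with $\Icof{J}$, which is automatically stable under pushout and transfinite composition. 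Smith's theorem now delivers the model structure, with cofibrations $\Icof{I}$, weak equivalences $W$, and fibrations $\inj{J}$; the identification $\inj{I} = $ trivial fibrations established above shows that its trivial fibrations are precisely those of the statement.
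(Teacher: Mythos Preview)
The paper does not actually prove this theorem: its proof reads ``See Theorem 2.12 of \cite{browngolasinski:modscc}.'' So there is no argument to compare against beyond the original Brown--Golasi\'nski paper, which predates the modern cofibrant-generation machinery and proceeds by direct verification of Quillen's axioms, constructing factorizations by hand using the cylinder/path-object formalism for crossed complexes rather than by a small-object or Smith-type argument.

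Your outline via cofibrant generation is a reasonable modern reformulation, but as written it is a strategy rather than a proof, and it is slightly redundant. You invoke Smith's theorem and argue accessibility of $W$, yet you then also propose an explicit set $J$ and the verifications $\Icell{J}\subseteq W$ and $\inj{J}\cap W=\inj{I}$. If you can carry out the latter, Smith's theorem and the accessibility discussion are unnecessary: Kan's recognition theorem (as in Hovey or Hirschhorn) applies directly once you know $\Icell{J}\subseteq W$, the domains of $I$ and $J$ are small, and $\inj{I}=\inj{J}\cap W$. Conversely, if you want to rely on Smith's theorem, the point is precisely that you do \emph{not} need an explicit $J$; the accessibility of $W$ together with $\inj{I}\subseteq W$ and closure of $\Icof{I}\cap W$ under pushout and transfinite composition suffices. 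Pick one route.

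The substantive gap is the one you yourself flag: the claim that attaching a $J$-cell does not change any $\pi_n$. You assert this is ``where the real work lies'' and then do not do it. For crossed complexes this is not entirely formal: one must check that freely adjoining an $(n{+}1)$-cell killing an $n$-cycle interacts correctly with the groupoid action and, in low dimensions, with the nonabelian crossed-module structure. Until that computation is carried out (or replaced by an appeal to an explicit path object as in the original paper), the argument is incomplete.
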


\begin{proof}
See Theorem 2.12 of \cite{browngolasinski:modscc}.
\end{proof}

\subsection{The model structure on strict $\infty$-groupoids}

One obtains a model category structure on strict \oo-groupoids
by transferring the model structure on crossed complexes defined in
the previous section via the equivalence of categories $\A \colon \wgrp \to
\CrC$. We will call this model structure the Brown-Golasi\'nski model
structure. A morphism $f$ of strict \oo-groupoids is a weak equivalence
(respectively a trivial fibration) for the Brown-Golasi\'nski model structure
if and only if $\A(f)$ is a weak equivalence (respectively a trivial fibration) of
crossed complexes. We will denote these classes by $\Wcc$ and $\TFcc$
respectively.

We now describe these two classes more explicitly.

\begin{proposition}\label{prop:cc_w_eq}
We have $\Wcc = \Wgr$.
In other words, a morphism $f
\colon G \to H$ 
of strict \oo-group\-oids 
is a weak equivalence of strict \oo-groupoids if and only if
the morphism $\A(f) \colon \A(G) \to \A(H)$ is a weak equivalence of crossed
complexes.
\end{proposition}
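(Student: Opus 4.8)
The plan is to show that the homotopy invariants of a strict \oo-groupoid $G$, as used to define $\Wgr$, coincide naturally with those of the crossed complex $\A(G)$, as used to define $\Wcc$. Concretely, I would produce a bijection $\pi_0(G) \cong \pi_0(\A(G))$ and, for each object $x$ of $G$ and each $n \ge 1$, a group isomorphism $\pi_n(G, x) \cong \pi_n(\A(G), x)$, all compatible with an arbitrary morphism $f \colon G \to H$. Since a morphism of strict \oo-groupoids lies in $\Wgr$ exactly when $\pi_0(f)$ is bijective and every $\pi_n(f, x)$ is an isomorphism, and lies in $\Wcc$ exactly when the analogous conditions hold for $\A(f)$, such natural identifications give $\Wgr = \Wcc$ at once.

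The cases $n = 0, 1$ are low-dimensional and essentially formal. For $\pi_0$, both sides are the quotient $\overline{G_0}$ of the objects of $G$ by the relation ``joined by a $1$-arrow'', so the identification is tautological. For $\pi_1$, one has $\pi_1(\A(G), x) = \Coker(d_2 \colon C_2(x) \to C_1(x))$, where $C_1(x)$ is the group of $1$-loops at $x$ under $\comp_0$ and $d_2(u) = \Glt{1}(u)$; hence $\mathrm{Im}(d_2)$ consists of those $1$-loops homotopic to $\Glid{x}$. I would then check that two $1$-loops $a, b$ are homotopic if and only if $a \comp_0 \Glw{0}(b) \in \mathrm{Im}(d_2)$: a $2$-arrow $c \colon a \to b$ gives, via $c \comp_0 \Glid{\Glw{0}(b)}$, a $2$-arrow from $a \comp_0 \Glw{0}(b)$ to $\Glid{x}$, and conversely whiskering a $2$-arrow $\Glid{x} \to a \comp_0 \Glw{0}(b)$ by $\Glid{b}$ yields a $2$-arrow $b \to a$. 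This realizes $\pi_1(G, x)$, the $1$-loops modulo homotopy, as $C_1(x)/\mathrm{Im}(d_2) = \pi_1(\A(G), x)$.

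For $n \ge 2$ the target is $\pi_n(\A(G), x) = H_n(C(x)) = \ker(d_n)/\mathrm{Im}(d_{n+1})$. Here $\ker(d_n)$ is the set of $n$-arrows $a$ with $\Gls{n-1}(a) = \Glt{n-1}(a) = \Glid{x}$, that is the $n$-loops at the iterated unit, which is exactly the underlying set of $\pi_n(G, x) = \Aut_{\varpi_n(G)}(\Glid{x})$. I would invoke the Eckmann--Hilton argument to see that the composition $\comp_0$ inherited from $C_n(x)$ and the composition $\comp_{n-1}$ defining $\varpi_n(G)$ agree on this set and are abelian. To match the equivalence relations, I would repeat the whiskering computation of the previous paragraph, now forming $c \comp_0 \Glid{\Glw{0}(b)}$ and taking its $\comp_n$-inverse to obtain an element of $C_{n+1}(x)$; this shows that $a$ and $b$ are homotopic if and only if $a \comp_0 \Glw{0}(b)$ lies in $\mathrm{Im}(d_{n+1})$. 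Hence the homotopy classes are precisely the cosets of $\mathrm{Im}(d_{n+1})$ and $\pi_n(G, x) \cong H_n(C(x))$.

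Every isomorphism above is assembled from the globular structure, units, compositions and inverses of $G$, all of which an \oo-functor preserves, so the identifications are natural in $G$; comparing the defining conditions of $\Wgr$ and $\Wcc$ then yields the claimed equality. I expect the main obstacle to be the bookkeeping in the last two paragraphs, namely confirming through explicit whiskering that the homotopy relation on loops corresponds exactly to the subgroup $\mathrm{Im}(d_{n+1})$ (resp. $\mathrm{Im}(d_2)$), together with the Eckmann--Hilton identification of the two relevant composition laws.
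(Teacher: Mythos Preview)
Your proposal is correct and follows essentially the same route as the paper: reduce to showing that the homotopy invariants $\pi_0$ and $\pi_n(-,x)$ of $G$ and of $\A(G)$ coincide naturally, treat $\pi_0$ as tautological, and for $n\ge 1$ use the whiskering trick $c \mapsto c \comp_0 \Glid{\Glw{0}(b)}$ (followed by an inversion) to identify the homotopy relation on loops with membership in $\mathrm{Im}(d_{n+1})$. The paper's proof is considerably terser and does not spell out the Eckmann--Hilton step matching the $\comp_0$- and $\comp_{n-1}$-group structures on $\ker(d_n)$, which you rightly flag as a point to verify; otherwise the arguments coincide.
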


\begin{proof}
Since the two notions of weak equivalences are defined in terms of homotopy
groups, it suffices to show that the two notions of homotopy groups
coincide.

Let $G$ be a groupoid.
By definition, $\pi_0(G) = \pi_0(\A(G))$. 

Let $x$ be an object of $G$.  By definition, $\pi_1(\A(G), x)$ is the
cokernel of the morphism $\A(G)_2(x) \to \A(G)_1(x)$.  The set $\A(G)_1(x)$
is the set of $1$-arrows $u \colon x \to x$ in $G$ and two such arrows $u,
u'$ are identified in the cokernel if and only if there exists a $2$-arrow
from $\Glid{x}$ to $u \comp_0 \Glinv{u'}$ in $G$. This condition is
equivalent to the existence of a $2$-arrow from $u$ to $u'$. Hence $\pi_1(G,
x) = \pi_1(\A(G), x)$.

Let $n \ge 2$. The kernel of the map $d_n \colon \A(G)_n(x) \to \A(G)_{n-1}(x)$ is the
set $G_n(x, x)$. Thus the same argument as in dimension $1$ shows that
$\pi_n(G, x) = \pi_n(\A(G), x)$.
\end{proof}

\begin{proposition}\label{prop:cc_tr_fib}
A morphism $f \colon G \to H$ of strict \oo-groupoids is in $\TFcc$ if and only
if it satisfies the following conditions:
\begin{itemize}
\item for every object $y$ of $H$, there exists an object $x$ of $G$ such
that $f(x) = y$;
\item for every pair $(x, y)$ of objects of $G$, the map 
\[ \Homens{G}{x}{y} \to \Homens{G}{f(x)}{f(y)} \]
 induced by $f$ is a surjection;
\item for all $n \ge 2$, every object $x$ of $G$ and every $(n-1)$-arrow  $u
\colon \Glid{x} \to \Glid{x}$, the map
\[ \Homens{G}{\Glid{x}}{u} \to \Homens{H}{\Glid{f(x)}}{f(u)} \]
is surjective.
\end{itemize}
\end{proposition}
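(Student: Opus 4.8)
The plan is to unwind the definition of $\TFcc$. By definition, a morphism $f \colon G \to H$ lies in $\TFcc$ if and only if $\A(f) \colon \A(G) \to \A(H)$ is a trivial fibration of crossed complexes, that is, if and only if it satisfies the three conditions listed in the definition of a trivial fibration of crossed complexes. I would therefore translate each of these three conditions into a condition on $f$ directly, by means of the explicit description of the functor $\A$ recalled in Section~\ref{sec:equiv_grp_cc}, namely the formulas for $\A(G)_0$, for the groupoid $\A(G)_{\le 1}$, for the groups $\A(G)_n(x)$, and for the differentials $d_n$. Since the whole argument consists of such translations, it will automatically yield both implications.

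The first two conditions are almost immediate. As $\A(G)_0 = G_0$ and $\A(f)$ acts on objects exactly as $f$ does, surjectivity of $\A(f)$ on objects is precisely the first condition. For the second, recall that $\A(G)_{\le 1}$ is the $1$-truncation of $G$: its objects are those of $G$ and its arrows from $x$ to $y$ are the $1$-arrows of $G$, i.e. the elements of $\Homens{G}{x}{y}$. Hence the lifting condition for $\A(f)$ on arrows of the underlying groupoids translates verbatim into the surjectivity of $\Homens{G}{x}{y} \to \Homens{H}{f(x)}{f(y)}$, which is the second condition.

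The heart of the matter is the third condition. Here I would use that $\A(G)_n(x)$ is the set of $n$-arrows $a$ of $G$ with $\Gls{n-1}(a) = \Glid{x}$ and that $d_n$ is induced by $\Glt{n-1}$. The key bookkeeping step is the identification, for an $(n-1)$-arrow $t$ with $\Gls{n-2}(t) = \Glid{x}$, of the fibre $\{a \in \A(G)_n(x) : d_n(a) = t\}$ with the hom-set $\Homens{G}{\Glid{x}}{t}$ of $n$-arrows from $\Glid{x}$ to $t$, and likewise on the side of $H$. Granting this, the crossed-complex lifting condition — for every such $t$ and every $v$ lying over $f(t)$, the existence of a lift $u$ — becomes exactly the surjectivity of the maps $\Homens{G}{\Glid{x}}{t} \to \Homens{H}{\Glid{f(x)}}{f(t)}$ as $t$ runs over the $(n-1)$-arrows with source $\Glid{x}$, which is the third condition.

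I expect the main obstacle to lie in the careful matching of the quantifiers and the treatment of the degenerate case in this last step. In the crossed-complex condition $t$ ranges over all of $\A(G)_{n-1}(x)$, that is over the $(n-1)$-arrows whose source is $\Glid{x}$ but whose target need not be $\Glid{x}$; when $t$ is not parallel to $\Glid{x}$ the hom-set $\Homens{G}{\Glid{x}}{t}$ is empty, and surjectivity then asserts exactly that $\Homens{H}{\Glid{f(x)}}{f(t)}$ is empty as well, i.e. that $f(t)$ is not parallel to $\Glid{f(x)}$. One must check that this borderline instance is correctly accounted for, so that no lifting obligation is silently lost or added. This is precisely the globular source/target computation — using the globular identities and the compatibility of $f$ with $\Gls{n-1}$ and $\Glt{n-1}$ — that converts the abstract lifting property of $\A(f)$ into the stated surjectivity of hom-sets of $G$ and $H$.
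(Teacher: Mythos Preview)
Your approach is exactly the paper's: unwind the definition of $\A$ and read off each of the three crossed-complex lifting conditions as a statement about hom-sets of $G$ and $H$. The paper's proof is literally one sentence to that effect, so your expansion is entirely in its spirit.

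You do go further than the paper in one respect, and it is worth flagging. You correctly notice that in the crossed-complex condition the element $t$ ranges over all of $\A(G)_{n-1}(x)$, i.e.\ over $(n-1)$-arrows with source $\Glid{x}$ but \emph{arbitrary} target, whereas the proposition only quantifies over $u\colon \Glid{x}\to\Glid{x}$. Your ``borderline'' analysis is right: for $t$ not parallel to $\Glid{x}$ the lifting condition says that $\Homens{H}{\Glid{f(x)}}{f(t)}$ must be empty, i.e.\ that $f(\Glt{n-2}(t))\neq \Glid{f(x)}$. You say this must be checked; note, however, that it is \emph{not} obviously implied by the three displayed conditions (nothing there prevents $f$ from identifying a nontrivial $(n-2)$-arrow with an identity). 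So the ``if'' direction of the proposition, taken at face value, is not quite a pure translation---either one tacitly reads the third condition with the broader quantification over all $t$ with $\Gls{n-2}(t)=\Glid{x}$, or one needs a short extra argument. The paper glosses over this, and you have put your finger on precisely where.

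For the use made of the proposition later (Proposition~\ref{prop:same_tr_fib}), only the ``only if'' direction is really needed in a nontrivial way, and there the restricted quantifier is harmless. But if you want the full biconditional as stated, you should address this edge case explicitly rather than leave it as ``one must check''.
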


\begin{proof}
By definition, $f$ is in $\TFcc$ if and only if $\A(f)$ is a trivial
fibration of crossed complexes.  This proposition is then just a matter of
translation using the definition of the functor $\A$.
\end{proof}

\section{The folk model structure on $\ogrp$}\label{sec:folkmsgr}

This section shows that the folk model structure on $\ocat$ defined in~\cite{lafontetal:folkms} transfers to $\ogrp$ via the inclusion functor
\begin{displaymath}
  \IN\colon\ogrp\to\ocat.
\end{displaymath}
We first give a brief review of the main results of~\cite{lafontetal:folkms}, and introduce the material needed to prove the transfer theorem.

\subsection{The folk model structure on $\ocat$}\label{subsec:folkmscat}

Given an $\infty$-category $C$, we define {\em reversible} cells in $C$ and
the relation of {\em $\omega$\nobreakdash-equival\-ence} between cells of
$C$ by mutual coinduction on $n\geq0$.
\begin{definition}\label{def:omeq}
Let $n\in\NN$:
\begin{itemize}
\item an $n$-cell $x$ is {\em $\omega$-equivalent} to an $n$-cell $y$ if
there is a reversible $(n+1)$-cell $u\colon x\to y$;
\item an $(n+1)$-cell $u\colon x\to y$ is {\em reversible} if there is an $(n+1)$-cell $\winv u\colon y\to x$ such that $\winv u\Comp n  u$ is $\omega$-equivalent to $\unit x$ and $u\Comp n \winv u$ is $\omega$-equivalent to $\unit y$.
\end{itemize}
\end{definition}
Note that, for each $r\geq 0$, if two cells are $r$-equivalent in the sense
of~\cite{street:algors}, then they are $\omega$-equivalent, with the
converse being false. We also refer to~\cite{jacobsrutten:coindu} for a
gentle introduction to coinductive methods. Remark also that if $G$ is an
$\infty$-groupoid, any $(n+1)$-cell $u$ of $G$ is reversible and the cell
$\winv u$ whose existence is stated in Definition~\ref{def:omeq} is of
course just $\inv u$.

Let $\Weq$ denote the class of those morphisms $f\colon C\to D$ satisfying the following two conditions:
  \begin{enumerate}
  \item for each $0$-cell $y$ in $D$, there is a $0$-cell $x$ in $C$ such that $fx$ is $\omega$-equivalent to $y$;
  \item for each pair $(x, x')$ of parallel $n$-cells in $C$, where $n\geq0$, and each $(n+1)$-cell $v\colon fx\to fx'$, there is an $(n+1)$-cell $u\colon x\to x'$ such that $fu$ is $\omega$-equivalent to~$v$.
  \end{enumerate}
Now for each $n\geq 0$, we define the {\em $n$-globe} $\OO n$ as the free $\infty$-category generated by the representable globular set $Y(n)=\Hom{\glob}{n}{-}$. Thus $Y(n)$ has exactly one $n$-cell, two $k$-cells for each $k<n$ and no $k$-cell for $k>n$. Let also $\partial Y(n)$ be the globular set having the same cells as $Y(n)$ except in dimension $n$ where $(\partial Y(n))_n=\emptyset$. We denote by $\DO n$ the free $\infty$-category on $\partial Y(n)$. We finally have, for each $n\geq 0$, an inclusion morphism
\begin{displaymath}
  \ii n\colon \DO n\to \OO n.
\end{displaymath}
The set $\setbis{\ii n}{n\in\NN}$ is denoted by $\I$.

A map is a {\em trivial fibration} if it has the right-lifting property with respect to $\I$ and a {\em cofibration} if it has the left-lifting property with respect to all trivial fibrations.

\begin{proposition}\label{prop:factor}
 Any $\infty$-functor $f$ factors as $f=p\circ k$ where $p$ is a trivial fibration and $k$ is a cofibration. 
\end{proposition}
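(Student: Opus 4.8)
The plan is to obtain the factorization by applying Quillen's small object argument to the set $\I=\setbis{\ii n}{n\in\NN}$. To begin, I would record the two immediate reformulations of the definitions: a morphism is a trivial fibration exactly when it lies in $\rlp{\I}$, and a morphism is a cofibration exactly when it lies in $\llp{(\rlp{\I})}$. It therefore suffices to factor an arbitrary $\infty$-functor $f$ as $f=p\circ k$ with $p\in\rlp{\I}$ and $k$ a transfinite composite of pushouts of coproducts of maps in $\I$ (a relative $\I$-cell complex), since such a $k$ automatically has the left-lifting property against every map in $\rlp{\I}$ and so is a cofibration.

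The hypotheses of the small object argument hold because $\ocat$ is locally presentable, as recalled in Section~\ref{sec:defs}. Being locally presentable, $\ocat$ is in particular cocomplete, so all the coproducts, pushouts and transfinite composites used in the construction exist; moreover every object of $\ocat$ is presentable, hence small. In particular each source $\DO n$ of a generating map $\ii n$ is small relative to the class of relative $\I$-cell complexes. I would then run the usual construction on $f\colon C\to D$: at each successor stage, form the pushout of the coproduct of all maps $\ii n$ taken over the set of commutative squares from $\ii n$ into the current map over $D$, take transfinite composites at limit stages, and let $k$ be the resulting map out of $C$ and $p$ the induced map to $D$. Smallness of the sources guarantees that any square from some $\ii n$ to $p$ factors through a bounded stage and is therefore filled, so $p\in\rlp{\I}$; and $k$ is a relative $\I$-cell complex by construction.

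The only genuinely delicate ingredient is the smallness input just invoked — that each $\DO n$ is small relative to the class of relative $\I$-cell complexes — together with the closure property that relative $\I$-cell complexes lie in $\llp{(\rlp{\I})}$. Both are supplied by the local presentability of $\ocat$ and by the standard theory of the small object argument; the remaining verifications (termination of the construction and the lifting property of $p$) are routine.
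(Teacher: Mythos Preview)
Your proposal is correct and follows exactly the approach of the paper, which simply says ``by a standard application of the small object argument, using the fact that $\ocat$ is locally presentable.'' You have merely unpacked this one-line justification in greater detail.
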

\begin{proof}
  By a standard application of the small object argument, using the fact
  that $\ocat$ is locally presentable.
\end{proof}
On the other hand the maps $\sce n,\tge n\colon n\to n+1$ in the globular category $\glob$ (see Section~\ref{sec:defs}) induce corresponding maps from $\OO n$ to $\OO{n+1}$, of the form $\ii{n+1}\circ\sss n$ and $\ii{n+1}\circ\ttt n$ respectively, where $\sss n,\ttt n\colon \OO n\to\DO{n+1}$. Moreover, we get a pushout diagram
\begin{displaymath}
  \begin{xy}
    \xymatrix{\DO n\ar[r]^{\ii{n}}\ar[d]_{\ii{n}} & \OO n\ar[d]^{\sss{n}}\\
              \OO n\ar[r]_{\ttt{n}} & \DO{n+1}.
}
  \end{xy}
\end{displaymath}
Now the above pushout determines a canonical map
\begin{displaymath}
  \ooo n \colon  \DO{n+1}\to \OO{n}
\end{displaymath}
such that $\ooo n\circ \sss n = \ooo n\circ \ttt n= \id_{\OO{n}}$. Proposition~\ref{prop:factor} applies to $\ooo n$, yielding an object $\PP n$ together with a trivial fibration $\pp n\colon \PP n\to \OO n$ and a cofibration $\kk n\colon \DO{n+1}\to\PP n$ satisfying $\ooo n=\pp n\circ\kk n$. We finally define $\jj n\colon \OO n\to\PP n$ as $\kk n\circ\sss n$ and
\begin{displaymath}
  J=\setbis{\jj n}{n\in\NN}.
\end{displaymath}

\begin{theorem}\label{thm:folkmodel}
  There is a cofibrantly generated model structure on $\ocat$ where $\Weq$ is the class of weak equivalences, $\I$ a set of generating cofibrations and $J$ a set of generating trivial cofibrations.
\end{theorem}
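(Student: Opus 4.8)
The goal is Theorem~\ref{thm:folkmodel}: the classes $\Weq$, $\I$, and $J$ assemble into a cofibrantly generated model structure on $\ocat$. Since this is stated as the central theorem of the folk model structure (attributed to Lafont--Worytkiewicz--Métayer), and the paper has just finished constructing $\I$, $J$, and $\Weq$, the natural strategy is to invoke a \emph{recognition theorem} for cofibrantly generated model categories --- the standard criterion of Kan (as in Hovey's book, Theorem 2.1.19) --- which reduces the problem to a short checklist of verifications. Let me explain how I would organize that checklist, assuming throughout the results already established in the excerpt.

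**The main steps.**

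First I would record the structural prerequisites: $\ocat$ is locally presentable (stated right after the definition of $\ocat$), hence complete, cocomplete, and admitting the small object argument with respect to any small set of maps; and the classes $\I$ and $J$ are each small sets whose domains and codomains are small relative to the whole category. This is what makes Proposition~\ref{prop:factor} and its analogue for $J$ available. Second, I would verify that $\Weq$ satisfies the two-out-of-three property and is closed under retracts; the two-out-of-three property should follow readily from the formulation of $\Weq$ in terms of the (co)inductively defined $\omega$-equivalence and the lifting-type conditions (1) and (2), since $\omega$-equivalence is an equivalence relation compatible with composition. Third, and most substantively, I would establish the two characterizations that glue the whole thing together:
\begin{enumerate}
\item $\rlp{\I} = \Weq \cap \rlp{J}$, i.e. the $\I$-injectives (the trivial fibrations of the statement) are exactly the maps that are simultaneously weak equivalences and $J$-injectives (the fibrations);
\item $\Icof{J} \subseteq \Icof{\I} \cap \Weq$, i.e. every relative $J$-cell complex is both a cofibration and a weak equivalence.
\end{enumerate}
Once these two inclusions hold (together with the reverse of the first, or an argument that the generated classes match), Kan's criterion produces the model structure, with cofibrations $= \Icof{\I}$, trivial cofibrations $= \Icof{J}$, and fibrations and trivial fibrations characterized by right lifting against $J$ and $\I$ respectively.

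**Where the difficulty lies.**

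The routine parts are the smallness and the retract/two-out-of-three closure. The genuinely hard work is in step~3. The characterization $\rlp{\I} = \Weq \cap \rlp{J}$ demands showing that a map having the right lifting property against the globe inclusions $\ii n$ is automatically an $\omega$-weak-equivalence and a $J$-fibration --- which requires unwinding how lifting against $\ii n$ forces surjectivity on cells and the filling of globular boundaries, and matching this against conditions (1) and (2) defining $\Weq$. The harder half still is $\Icof{J} \subseteq \Weq$: one must prove that each generating trivial cofibration $\jj n = \kk n \circ \sss n \colon \OO n \to \PP n$ is a weak equivalence, and that weak equivalences in $\Weq$ are stable under the transfinite pushouts and composites that build relative $J$-cell complexes. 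This is exactly the place where the delicate construction of $\PP n$ via the factorization of $\ooo n$ (the ``co-nerve'' of the cylinder, with $\pp n \circ \kk n = \ooo n$ and $\ooo n \circ \sss n = \id$) pays off: $\PP n$ is designed to be a cylinder object witnessing that $\jj n$ is a trivial cofibration, so that lifting against $J$ encodes exactly the homotopical surjectivity in conditions (1)--(2) of $\Weq$. I expect the main obstacle to be the stability of $\Weq$ under transfinite composition of such pushouts, since $\Weq$ is defined by a coinductive, non-obviously-finitary condition; controlling $\omega$-equivalence along a tower of pushouts is where the real analysis --- and presumably the bulk of the cited proof in~\cite{lafontetal:folkms} --- must take place.
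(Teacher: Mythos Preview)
The paper does not give a proof of this theorem at all: immediately after the statement it simply records that ``This statement is in fact~\cite[Theorem 4.39]{lafontetal:folkms} and the main result of that article.'' So there is nothing in the present paper to compare your proof sketch against --- the theorem is quoted as background, not proved.

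That said, your outline is a faithful description of the standard Kan--Hovey recognition procedure, and you have correctly located the sensitive point: showing that relative $J$-cell complexes lie in $\Weq$, i.e.\ that weak equivalences are stable under the pushouts and transfinite composites that arise from attaching along the $\jj n$. It is worth noting that the very tools this paper introduces \emph{after} the theorem --- the path object functor $\cnx$ (Section~\ref{subsec:path}) and the class $\Imm$ of immersions (Definition~\ref{def:immersions} and Proposition~\ref{prop:immersions}) --- are precisely the machinery used in~\cite{lafontetal:folkms} to carry out that step. The argument there is not a direct analysis of $\omega$-equivalence along towers of pushouts, as you anticipate, but rather the chain: every trivial cofibration is an immersion, immersions are closed under pushout (by an explicit construction using $\cnx$), and immersions are weak equivalences. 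So your identification of the obstacle is right, but the actual resolution in~\cite{lafontetal:folkms} goes via the auxiliary class $\Imm$ rather than through a transfinite stability argument for $\Weq$ itself.
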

This statement is in fact~\cite[Theorem 4.39]{lafontetal:folkms} and the main result of that article.

\subsection{Path object}\label{subsec:path}

Let $C$ be an object in a model category and $\Delta_C\colon C\to C\times C$ be the diagonal map. A {\em path object} for $C$ consists in an object $P_C$ together with a factorization of $\Delta_C$ of the form
\begin{displaymath}
  \begin{xy}
    \xymatrix{C\ar[r]^{j}\dar[rr]_{\Delta_C} & P_C\ar[r]^{p} & C\times C}
  \end{xy}
\end{displaymath}
where $p$ is a fibration and $j$ is a weak equivalence. Such a $P_C$ is not
unique: in the case of $\ocat$, one particular choice is given by the
functor $\cnx$ we now describe. We first define, by induction on $n$, the
notion of {\em $n$-cylinder between $n$-cells $x$, $y$ of an
$\infty$-category $C$}. A few notations will be useful: for each $n$-cell
$x$ we denote by $\Sce x$, respectively $\Tge x$ its $0$-source $s_0 x$,
respectively $0$-target $t_0 x$. Now let $C$ be an $\infty$\nobreakdash-category and
$x$, $y$ two $0$-cells in it. There is an $\infty$-category $\Hom{C}{x}{y}$
whose $n$-cells are the $(n+1)$\nobreakdash-cells $u$ of $C$ such that $\Sce u=x$ and
$\Tge u=y$. Whenever $u$ is such an $(n+1)$\nobreakdash-cell of $C$, we denote by $\Sht
u$ the corresponding $n$-cell of $\Hom Cxy$. Finally, let $x$, $y$, $z$ be
$0$-cells of $C$. Each $1$-cell $u\colon x\to y$ determines an
$\infty$-functor $-\act u\colon \Hom{C}{y}{z}\to\Hom{C}{x}{z}$ given by
$\Sht v\act u=\Sht{v\comp_0 u}$. Likewise $u\colon y\to z$ determines an
$\infty$-functor $u\act -\colon \Hom Cxy\to\Hom Cxz$ by $u\act\Sht
v=\Sht{u\comp_ 0 v}$.

\begin{definition}\label{def:cylinder}
\begin{enumerate}
\item A {\em 0-cylinder} $U \colon  x \cto y$ in $C$ is given by a
reversible $1$-cell $\Pal U \colon  x \to y$;
\item If $n > 0$, an {\em
$n$-cylinder} $U \colon  x \cto y$ in $C$ is given by two reversible 1-cells
$\Sce U \colon  \Sce x \to \Sce y$ and $\Tge U \colon  \Tge x \to \Tge y$,
together with some $(n-1)$-cylinder $\Sht U \colon  \Tge U\act\Sht x \cto
\Sht y\act\Sce U$ in the $\infty$-category $\HOM{\Sce x}{\Tge y}=\Hom C{\Sce
x}{\Tge y}$.
\end{enumerate}
\end{definition}
If $U \colon  x \cto y$ is an $n$-cylinder in $C$, we write $\Top_C\, U$ and $\Bot_C\, U$ for the $n$-cells $x$ and $y$, or simply $\Top\, U$ and $\Bot\, U$. Figure~\ref{fig:cylone} represents $n$-cylinders for $n=0$ and $n=1$. 
\begin{figure}
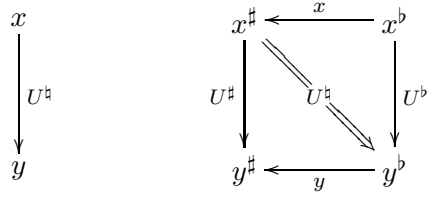

  \centering
  $
\xygraph{ 
!{<0cm,0cm>;<1cm,0cm>:<0cm,1cm>::} 
%
%
!{(0,0) }*+{x}="x"
!{(3,0) }*+{\Tge{x}}="xsharp"
!{(5,0)}*+{\Sce{x}}="xflat"
!{(0,-2) }*+{y}="y"
!{(3,-2) }*+{\Tge{y}}="ysharp"
!{(5,-2)}*+{\Sce{y}}="yflat"
%
%
"x":"y"^{\Pal{U}}
"xflat":"yflat"^{\Sce{U}}
"xflat":"xsharp"_x
"yflat":"ysharp"^y
"xsharp":"ysharp"_{\Tge{U}}
"xsharp":@{=>}"yflat"|{\Pal{U}}
}
$
  \caption{$n$-cylinders for $n=0,1$}
  \label{fig:cylone}
\end{figure}
For each $n\in\NN$, any $(n+1)$-cylinder $W \colon  z \cto z'$ in an $\infty$-category $C$ determines a pair of $n$-cylinders in $C$:
\begin{definition}\label{def:stcyl}
The \emph{source $n$-cylinder} $U \colon  x \cto x'$ and the \emph{target
$n$-cylinder} $V \colon  y \cto y'$ of the $(n+1)$-cylinder $W \colon z
\cto z'$ between $(n+1)$-cells $z \colon  x \to y$ and $z' \colon  x' \to
y'$ are defined inductively by: \begin{itemize}
\item if $n = 0$, then $\Pal U = \Sce W$ and $\Pal V = \Tge W$;
\item if $n > 0$, then $\Sce U = \Sce V = \Sce W$ and $\Tge U = \Tge V =
\Tge W$, whereas the two $(n-1)$\nobreakdash-cylin\-ders $\Sht U$ and $\Sht V$ are
respectively defined as the source and the target of the $n$-cylinder $\Sht
W$ in the $\infty$-category $\HOM {\Sce z} {\Tge{z'}}$.
\end{itemize}
\end{definition}
If $W$ has source $U$ and target $V$ we write $W \colon  U \to V$ or $\Cto {W \colon  U \to V} z {z'}$ (see Figure~\ref{fig:scetge}).
\begin{figure}
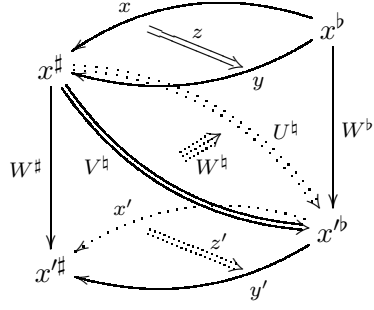

  \centering
  $
\xygraph{ 
!{<0cm,0cm>;<1.5cm,0cm>:<0cm,.9cm>::} 
!{(0,0) }*+{\Tge{x}}="xsharp"
!{(0,-3) }*+{\Tge{x'}}="xsharpprime"
!{(2.5,0.6)}*+{\Sce{x}}="xflat"
!{(2.5,-2.4)}*+{\Sce{x'}}="xflatprime"
!{(.8,0.6)}*+{}="x"
!{(1.7,0)}*+{}="y"
!{(.8,-2.4)}*+{}="xp"  
!{(1.7,-3)}*+{}="yp"
!{(1.1,-1.4)}*+{}="domw"  
!{(1.5,-1)}*+{}="codw"
"xflat":"xflatprime"^{\Sce{W}}
"xsharp":"xsharpprime"_{\Tge{W}}
"xflat":@/^.5cm/"xsharp"^(.3){y}
"xflat":@/_.55cm/"xsharp"_(.7){x}
"xflatprime":@/^.5cm/"xsharpprime"^(.3){y'}
"xflatprime":@{.>}@/_.55cm/"xsharpprime"_(.7){x'}
"xsharp":@{:>}@/^.6cm/"xflatprime"^(.7){\Pal{U}}
"xsharp":@{=>}@/_.6cm/ "xflatprime"_(.3){\Pal{V}}
"x":@{=>}"y"^z
"xp":@{:>}"yp"^(.7){z'}
"domw":@3{.>}"codw"_{\Pal{W}}
}
$
  \caption{source and target of a $2$-cylinder}
  \label{fig:scetge}
\end{figure}
It turns out that the source and target maps so defined satisfy the globular relations, so that the correspondence
\begin{displaymath}
    n\mapsto \setbis{U}{\text{$U$ is an $n$-cylinder in $C$}}
\end{displaymath}
determines a globular set $\Cnx C$. We now turn to {\em trivial $n$-cylinders}:
\begin{definition}
The \emph{trivial $n$-cylinder} $\Triv \, x \colon  x \cto x$ on the $n$-cell $x$ is defined inductively by: 
\begin{itemize}
\item if $n = 0$, then $\Pal{(\Triv \, x)} = \unit x$;
\item if $n > 0$, then $\Sce{(\Triv \, x)} = \unit{\Sce x}$ and $\Tge{(\Triv \, x)} = \unit{\Tge x}$, whereas $\Sht{\Triv \, x}$ is the trivial cylinder $\Triv \Sht x$ in $\HOM {\Sce x} {\Tge x}$.
\end{itemize}
\end{definition}
We write $\Triv_C\, x$ in case we need to mention the ambient $\infty$-category $C$.

Let us finally recall from~\cite[Appendix A]{lafontetal:folkms} that $\Cnx
C$ becomes a strict $\infty$-category when defining units and compositions
as follows:
\begin{definition}
Let  $U \colon  x \cto y$ be an $n$-cylinder. We define the $(n+1)$-cylinder
$\Cto {\unit U \colon \break U \to  U} {\unit x} {\unit y}$ by induction on $n$:
\begin{itemize}
\item if $n = 0$, then $\Sce{(\unit U)} = \Tge{(\unit U)} = \Pal U$, whereas $\Sht{\unit U} = \Triv \Sht{\unit{\Pal U}}$;
\item if $n > 0$, then $\Sce{(\unit U)} = \Sce U$ and $\Tge{(\unit U)} = \Tge U$, whereas $\Sht{\unit U} = \unit {\Sht U}$.
\end{itemize}
\end{definition}

In order to define composition, we first introduce the operation of {\em concatenation}:

\begin{definition}
  Let $\Cto Uxy$ and $\Cto Vyz$ be two $n$-cylinders. The {\em concatenation} $\Cto{V\comp U}{x}{z}$ of $U$ and $V$ is defined by induction on $n$:
  \begin{itemize}
  \item if $n=0$, then $\Pal{(V\comp U)}=\Pal V\Comp 0\Pal{U}$;
  \item if $n>0$, then $\Sce{(V\comp U)}=\Sce V\Comp 0\Sce U$, $\Tge{(V\comp U)}=\Tge V\Comp 0\Tge U$ and $\Sht{V\comp U}=\Sht V\act\Sce U\comp\Tge V\act\Sht U$.
  \end{itemize}
\end{definition}

\begin{definition}
Let $m\geq 1$, $0\leq n<m$ and $\Cto{U}{x}{x'}$, $\Cto{V}{y}{y'}$ two
$m$-cylinders such that $\Glt{n}(U)=\Gls{n}(V)$. The {\em composition}
$\Cto{V \Comp n U}{y\Comp n x}{y'\Comp n x'}$ is defined by induction on $n$
as follows:
\begin{itemize}
\item $\Sce{(V \Comp 0 U)} = \Sce U$, $\Tge{(V \Comp 0 U)} = \Tge V$ and
$\Sht{V \Comp 0 U} = y' \act \Sht U \comp \Sht V \act x$;
\item if $n > 0$, then $\Sce{(V \Comp n U)} = \Sce U = \Sce V$, $\Tge{(U
\Comp n V)} = \Tge U = \Tge V$ and $\Sht{V \Comp n U} = \break\Sht V \Comp{n-1}
\Sht U$.
\end{itemize}
\end{definition}

Note that explicit formulas may be found in~\cite{metayer:respol}. 

For example, Figure~\ref{fig:compo} shows the composition $V\Comp 0 U$ of
two $1$-cylinders $U\colon  x\cto x'$ and $V\colon  y\cto y'$ such that
$\Tge U=\Sce V$. Precisely, the composite $V\Comp 0 U$ is the $1$-cylinder
$W\colon z\cto z'$ where $z=y\Comp 0 x$, $z'=y'\Comp 0 x'$, $\Sce W=\Sce U$,
$\Tge W=\Tge V$, and the $0$-cylinder $\Sht W$ of $\Hom{C}{\Sce
z}{\Tge{z'}}$ is the reversible $1$-cell of $\Hom{C}{\Sce z}{\Tge{z'}}$
given by the following corresponding reversible $2$-cell of $C$:
\begin{displaymath}
\Pal W = (y'\Comp 0 \Pal U)\Comp 1(\Pal V\Comp 0 x). 
\end{displaymath}

\begin{figure}[ht]
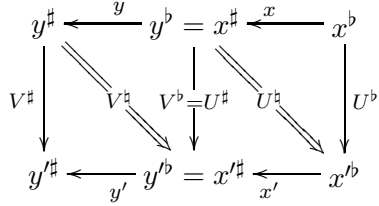

  \centering
$
\xygraph{ 
!{<0cm,0cm>;<1cm,0cm>:<0cm,1cm>::} 
%
%
!{(0,0) }*+{\Tge{y}}="ysharp"
!{(2,0) }*+{\Sce{y}=\Tge{x}}="xsharp"
!{(4,0)}*+{\Sce{x}}="xflat"
!{(0,-2) }*+{\Tge{y'}}="ysharpprime"
!{(2,-2) }*+{\Sce{y'}=\Tge{x'}}="xsharpprime"
!{(4,-2)}*+{\Sce{x'}}="xflatprime"
%
%
"xflat":"xsharp"_{x}
"xsharp":"ysharp"_{y}
"xflatprime":"xsharpprime"^{x'}
"xsharpprime":"ysharpprime"^{y'}
"ysharp":"ysharpprime"_{\Tge{V}}
"xsharp":"xsharpprime"|{\Sce{V}=\Tge{U}}
"xflat":"xflatprime"^{\Sce{U}}
"ysharp":@{=>}"xsharpprime"|{\Pal{V}}
"xsharp":@{=>}"xflatprime"|{\Pal{U}}
}
$
  \caption{composition of $1$-cylinders}
  \label{fig:compo}
\end{figure}
The following result summarizes the main properties of $\cnx$:
\begin{theorem}\label{thm:connect}
The correspondence $C\mapsto\Cnx C$ induces an endofunctor on $\ocat$, and $\Top,\Bot\colon \cnx\to \id$, $\Triv\colon \id\to\cnx$ are natural transformations.
\end{theorem}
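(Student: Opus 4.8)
The plan is to supply the missing morphism part of the endofunctor $\cnx$ and then read off the naturality statements, the object part $C\mapsto\Cnx{C}$ being given by the strict $\infty$-category structure recalled from \cite[Appendix A]{lafontetal:folkms}. Everything is organised around a single induction on the dimension $n$ of cylinders, together with the recursion into hom-$\infty$-categories built into Definition~\ref{def:cylinder}: for an $n$-cylinder $U\colon x\cto y$ with $n>0$, its interior $\Sht{U}$ is an $(n-1)$-cylinder in $\Hom{C}{\Sce{x}}{\Tge{y}}$, and any $\infty$-functor $f\colon C\to D$ restricts to an $\infty$-functor $\Hom{C}{\Sce{x}}{\Tge{y}}\to\Hom{D}{f(\Sce{x})}{f(\Tge{y})}$ since it preserves $0$-source and $0$-target.

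Given $f\colon C\to D$, I would define $\Cnx{f}\colon\Cnx{C}\to\Cnx{D}$ by induction on $n$, uniformly in $f$: for $n=0$ set $\Pal{(\Cnx{f}(U))}=f(\Pal{U})$; for $n>0$ set $\Sce{(\Cnx{f}(U))}=f(\Sce{U})$ and $\Tge{(\Cnx{f}(U))}=f(\Tge{U})$, and let $\Sht{\Cnx{f}(U)}$ be the image of $\Sht{U}$ under the induced functor on hom-$\infty$-categories. The one nontrivial point for well-definedness is that the $1$-cells $\Pal{U}$, $\Sce{U}$, $\Tge{U}$ entering a cylinder are required to be reversible, so $\Cnx{f}(U)$ is a cylinder of $D$ only if $f$ sends reversible $1$-cells to reversible $1$-cells. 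I would therefore first prove the preliminary lemma that every $\infty$-functor preserves reversible cells and the relation of $\omega$-equivalence. As both notions are defined by mutual coinduction in Definition~\ref{def:omeq}, this is proved by mutual coinduction: if $u\colon x\to y$ is reversible with chosen weak inverse $\winv{u}$, then $f(\winv{u})$ is a weak inverse of $f(u)$, because $f$ commutes with $\Comp{n}$ and with units and, by the coinductive hypothesis, carries the $\omega$-equivalences $\winv{u}\Comp{n}u\sim\unit{x}$ and $u\Comp{n}\winv{u}\sim\unit{y}$ to $\omega$-equivalences; dually, if $x$ and $y$ are $\omega$-equivalent via a reversible cell $w\colon x\to y$, then $f(w)$ is reversible, whence $f(x)$ and $f(y)$ are $\omega$-equivalent.

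Next I would check that $\Cnx{f}$ is an $\infty$-functor and that $\cnx$ is functorial. Compatibility with the globular source and target of cylinders is immediate for $n=0$ and follows for $n>0$ by comparing Definition~\ref{def:stcyl} in $C$ and in $D$, using the inductive hypothesis for $\Sht{U}$. Compatibility with units $\unit{U}$ and with the compositions $\Comp{n}$ reduces, through the inductive definitions of units, concatenation and composition of cylinders, to the facts that $f$ commutes with $\comp_0$, with the $\comp_j$, with identities, and with the action functors $u\act-$ and $-\act u$; the latter commute with $f$ because they are given by pre- and post-composition with $u$. Functoriality in $f$, namely $\Cnx{\id}=\id$ and $\Cnx{g\circ f}=\Cnx{g}\circ\Cnx{f}$, is then immediate from the levelwise definition by a trivial induction on $n$.

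Finally, the natural transformations are read off directly. The maps $\Top_C,\Bot_C\colon\Cnx{C}\to C$ sending $U\colon x\cto y$ to $x$, respectively $y$, commute with source, target, units and the $\Comp{n}$ by the defining clauses of $\Top$, $\Bot$ and of the cylinder operations (a short induction), hence are $\infty$-functors; and the naturality squares $f\circ\Top_C=\Top_D\circ\Cnx{f}$ and $f\circ\Bot_C=\Bot_D\circ\Cnx{f}$ hold on the nose because $\Cnx{f}$ applies $f$ to the top and bottom cells. Likewise $\Triv_C\colon C\to\Cnx{C}$, $x\mapsto\Triv\,x$, is an $\infty$-functor by the inductive definition of $\Triv$ (which uses only units and trivial cylinders) and satisfies $\Cnx{f}\circ\Triv_C=\Triv_D\circ f$ by induction. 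The main obstacle is the preliminary coinductive lemma, where the interdependence of reversibility and $\omega$-equivalence forces a simultaneous argument; once it is in place, the rest is a matter of organising the nested induction on cylinder dimension and the recursion into hom-$\infty$-categories, together with the routine checks that $f$ commutes with the auxiliary operations used to build units, concatenation and composition.
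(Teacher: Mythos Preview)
The paper does not actually prove this theorem: it is stated without proof, as a summary of properties established in \cite[Appendix~A]{lafontetal:folkms}. Your proposal is therefore not comparable to a proof in the present paper, but it is a correct outline of how the result is obtained. In particular, your identification of the one genuinely nontrivial ingredient---that $\infty$-functors preserve reversible cells, proved by mutual coinduction with preservation of $\omega$-equivalence---is exactly right, and the rest (inductive definition of $\Cnx{f}$ via the recursion into hom-$\infty$-categories, and the routine checks of compatibility with globular structure, units, concatenation and composition) matches the development in the cited reference.
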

An additional property, of particular importance here, is that the functor $\cnx$
preserves \oo-group\-oids: 
\begin{lemma}\label{lemma:cylgrp}
  If $G$ is an $\infty$-groupoid, so is $\Cnx G$.
\end{lemma}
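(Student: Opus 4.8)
The plan is to deduce the statement from the characterisation of strict \oo-groupoids recalled in Section~\ref{sec:defs}, according to which a strict \oo-category is a strict \oo-groupoid as soon as it admits $\comp^i_{i-1}$-inverses for every $i \geq 1$. I would accordingly prove, by induction on $n \geq 1$ and \emph{simultaneously for every strict \oo-groupoid $G$}, that every $n$-cylinder in $G$ admits a $\Comp{n-1}$-inverse in $\Cnx G$. Carrying the induction over all \oo-groupoids at once is essential, since the inductive step feeds a hom-\oo-groupoid of $G$ into the induction hypothesis; here one uses that for $0$-cells $x$, $y$ of $G$ the \oo-category $\HOM{x}{y} = \Hom Gxy$ is itself a strict \oo-groupoid (Section~\ref{sec:defs}).

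For the inductive step, let $n \geq 2$ and let $W \colon z \cto z'$ be an $n$-cylinder; by Definition~\ref{def:cylinder}, $\Sht W$ is then an $(n-1)$-cylinder in the \oo-groupoid $\Hom{G}{\Sce z}{\Tge{z'}}$. Since $n - 1 \geq 1$, the induction hypothesis supplies a $\Comp{n-2}$-inverse of $\Sht W$ in $\Cnx(\Hom{G}{\Sce z}{\Tge{z'}})$. I would then build the $\Comp{n-1}$-inverse $\winv W$ of $W$ by keeping $\Sce{\winv W} = \Sce W$ and $\Tge{\winv W} = \Tge W$, taking its boundary $n$-cells to be the $G$-inverses of those of $W$, and letting $\Sht{\winv W}$ be the $\Comp{n-2}$-inverse just obtained. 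The verification that $W \Comp{n-1} \winv W = \unit{\Glt{n-1}(W)}$ and $\winv W \Comp{n-1} W = \unit{\Gls{n-1}(W)}$ then reduces, component by component, to the analogous equations in $\Cnx(\Hom{G}{\Sce z}{\Tge{z'}})$: indeed the defining formulas give $\Sht{V \Comp{n-1} U} = \Sht V \Comp{n-2} \Sht U$ and $\Sht{\unit U} = \unit{\Sht U}$ for composable $n$-cylinders $U$, $V$, the source and target $(n-1)$-cylinders are read through $\Sht$ from those of $\Sht W$ (Definition~\ref{def:stcyl}), and the $\Sce$- and $\Tge$-components match by construction. This step is a direct reduction.

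The main obstacle is the base case $n = 1$, which cannot be routed through $\Sht$ since the $\Comp 0$-composition of $1$-cylinders genuinely entangles the reversible $1$-cells $\Sce U$, $\Tge U$, the filler $\Pal{\Sht U}$, and the boundary $1$-cells $\Top U$, $\Bot U$, as witnessed by the formula $\Pal{(V \Comp 0 U)} = (\Bot V \Comp 0 \Pal U) \Comp 1 (\Pal V \Comp 0 \Top U)$. For a $1$-cylinder $U \colon x \cto y$ I would write down its $\Comp 0$-inverse $\winv U$ explicitly from the groupoid structure of $G$: its boundary $1$-cells become $\Glinv{x}$ and $\Glinv{y}$, its reversible $1$-cells are obtained by interchanging $\Sce U$ and $\Tge U$, and $\Pal{\Sht{\winv U}}$ is produced from $\Pal{\Sht U}$ by whiskering with the relevant inverse $1$-cells and then taking its $\Comp 1$-inverse. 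Checking that $U \Comp 0 \winv U$ and $\winv U \Comp 0 U$ equal the trivial cylinders $\unit{\Glt{0}(U)}$ and $\unit{\Gls{0}(U)}$ is then a direct, if laborious, computation in $G$ using associativity, the exchange law, and the invertibility of every cell of $G$; this is where the genuine work lies. Once both cases are established, the characterisation quoted at the outset yields that $\Cnx G$ is a strict \oo-groupoid, completing the proof.
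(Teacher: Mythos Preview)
Your proposal is correct and follows essentially the same approach as the paper: both argue by induction on $n \geq 1$, simultaneously over all strict \oo-groupoids, that every $n$-cylinder admits a $\Comp{n-1}$-inverse, handling $n=1$ by the explicit conjugation formula $\Pal{\winv W} = \Glinv{y} \Comp 0 \Glinv{(\Pal W)} \Comp 0 \Glinv{x}$ and reducing $n>1$ to the hom-\oo-groupoid via $\Sht{-}$. The only cosmetic difference is that the paper treats the base case briskly rather than as the ``main obstacle''; the verification there is in fact short.
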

\begin{proof}
 We show, by induction on $n\geq 1$, that if  $G$ is an  $\infty$-groupoid and $\Cto{W\colon U\to V}xy$ is an $n$-cylinder of $G$, there is an $n$-cylinder $\Cto{W'\colon V\to U}{\inv{x}}{\inv{y}}$ such that $W'\Comp{n-1}W=\unit{U}$ and $W\Comp{n-1} W'=\unit{V}$.
 \begin{itemize}
 \item Let $G$ be an $\infty$-groupoid and $\Cto{W\colon U\to V}xy$ a $1$-cylinder of $G$. By definition, we get two $1$-cells $\Pal{U}\colon \Sce x\to\Sce y$, $\Pal{V}\colon \Tge x\to\Tge y$ and a $2$-cell $\Pal{W}\colon \Pal{V}\Comp{0}x\to y\Comp{0}\Pal{U}$ in $G$. Consider $\inv{\Pal{W}}\colon y\Comp{0}\Pal{U}\to \Pal{V}\Comp{0}x$ the $\Comp 1$-inverse of $\Pal W$ and build
\begin{displaymath}
  \inv{y}\Comp{0}\inv{\Pal{W}}\Comp{0}\inv{x}\colon \Pal{U}\Comp{0}\inv{x}\to\inv{y}\Comp{0}\Pal{V}.
\end{displaymath}
If $\Cto{W'\colon V\to U}{\inv{x}}{\inv{y}}$ is the $1$-cylinder of $G$ defined by 
\begin{displaymath}
  \Pal{W'}= \inv{y}\Comp{0}\inv{\Pal{W}}\Comp{0}\inv{x},
\end{displaymath}
we get
\begin{displaymath}
  W'\Comp{0}W=\unit{U}\quad {\mathrm{and}} \quad W\Comp{0}W'=\unit{V},
\end{displaymath}
which proves the case $n=1$.
\item Let $n>1$ and suppose that the property holds for $n-1$. Let $G$ be an
$\infty$-groupoid and $\Cto{W\colon U\to V}{x}{y}$ an $n$-cylinder of $G$.
We get $1$-cells $\Sce{W}\colon \Sce{x}\to\Sce{y}$, $\Tge{W}\colon
\Tge{x}\to\Tge{y}$ and an $(n-1)$-cylinder $\Cto{\Sht{W}\colon
\Sht{U}\to\Sht{V}}{\Tge{W}\act\Sht{x}}{\Sht{y}\act\Sce{W}}$ in $H=\HOM{\Sce
x}{\Tge y}$. Now $H$ is an $\infty$-groupoid, so that the induction
hypothesis applies and there is an $(n-1)$-cylinder in $H$
  \begin{displaymath}
    \Cto{\Sht{W}'\colon \Sht{V}\to\Sht{U}}{\Tge{W}\act\Sht{\inv{x}}}{\Sht{\inv{y}}\act\Sce{W}}
  \end{displaymath}
such that $\Sht{W}'\Comp{n-2}\Sht{W}=\unit{\Sht{U}}$ and $\Sht{W}\Comp{n-2}\Sht{W}'=\unit{\Sht{V}}$. Hence we may define an $n$-cylinder $W'$ of $G$ by $\Sce{W'}=\Sce{W}$, $\Tge{W'}=\Tge{W}$ and $\Sht{W'}=\Sht{W}'$. By construction
\begin{displaymath}
  W'\Comp{n-1}W=\unit{U}\quad {\mathrm{and}} \quad W\Comp{n-1}W'=\unit{V}.
\end{displaymath}
 \end{itemize}
 \end{proof}

\begin{remark}\label{rem:Cyl_oo_n}
Let $n \ge 0$.
The proof of the previous lemma actually shows that if $G$ is a strict $(\infty,
n)$-category, then so is $\Cnx{G}$.
\end{remark}

\subsection{Immersions}

We now introduce a class of morphisms which plays an important part in the proof of the transfer theorem.
\begin{definition}\label{def:immersions}
An $\infty$-functor $f \colon  C \to D$ belongs to the class $\Imm$ of {\em immersions} if and only if there exist $\infty$-functors $g\colon D\to C$ and $h\colon D\to \Cnx D$ satisfying the following properties:
\begin{enumerate}
\item $g$ is a retraction of $f$, that is $g \circ f = \id_C$;
\item $\Top_D \circ h = f \circ g$ and $\Bot_D \circ h = \id_D$;
\item $h \circ f = \Triv_D \circ f$. In other words, $h$ is trivial on $f(C)$.
\end{enumerate}

\end{definition}
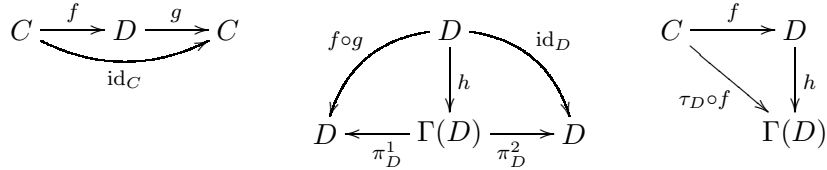
\begin{figure}[ht]
  \centering
  \[
\begin{xy}
\xymatrix{C \ar[r]^{f} \dar[rr]_{\id_C} & D \ar[r]^{g} & C}
\end{xy}
\qquad
\begin{xy}
\xymatrix{ & D \ar[d]^{h}\dar[ld]_{f\circ g} \uar[rd]^{\id_D} & \\ 
D & \Cnx D \ar[l]^{\Top_D} \ar[r]_{\Bot_D} & D}
\end{xy}
\qquad
\begin{xy}
\xymatrix{C \ar[r]^{f} \ar[rd]_{\Triv_D\circ f} & D \ar[d]^{h} \\
 & \Cnx D}
\end{xy}
\]
  \caption{Immersions}
  \label{fig:immersions}
\end{figure}
The following proposition summarizes the properties of immersions we need here.
\begin{proposition}\label{prop:immersions}
  The class $\Imm$ of immersions satisfies the following properties:
  \begin{enumerate}[(i)]
  \item $\Imm$ is closed by pushout; \label{it:pushout}
  \item all trivial cofibrations are immersions; \label{it:trivcofib}
  \item all immersions are weak equivalences. \label{it:immweq}
  \end{enumerate}
\end{proposition}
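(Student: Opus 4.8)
My plan rests throughout on Theorem~\ref{thm:connect}: $\cnx$ is an endofunctor of $\ocat$ and $\Top,\Bot\colon\cnx\to\id$, $\Triv\colon\id\to\cnx$ are natural, with $\Top_D\circ\Triv_D=\Bot_D\circ\Triv_D=\id_D$ (the latter being the path-object factorization $p\circ j=\Delta$). For assertion~(\ref{it:pushout}), let $f\colon C\to D$ be an immersion with data $g\colon D\to C$, $h\colon D\to\Cnx D$, and form the pushout of $f$ along $u\colon C\to C'$, producing $f'\colon C'\to D'$ and $v\colon D\to D'$ with $f'\circ u=v\circ f$. I would define the retraction $g'\colon D'\to C'$ as the map out of the pushout induced by $\id_{C'}$ on $C'$ and $u\circ g$ on $D$; these agree on $C$ since $g\circ f=\id_C$, and one checks $g'\circ f'=\id_{C'}$. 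Likewise I define $h'\colon D'\to\Cnx{D'}$ by $\Triv_{D'}\circ f'$ on $C'$ and $\Cnx v\circ h$ on $D$; naturality of $\Triv$ gives $\Cnx v\circ\Triv_D=\Triv_{D'}\circ v$, so the two legs agree on $C$. The three immersion axioms for $(g',h')$ are then verified by precomposing the relevant equalities with the pushout legs $f'$ and $v$ and invoking naturality of $\Top,\Bot,\Triv$; everything is formal.

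For assertion~(\ref{it:immweq}), let $f$ be an immersion with data $g,h$, and I check the two clauses defining $\Weq$. Given a $0$-cell $y$ of $D$, the cell $h(y)$ is a $0$-cylinder whose reversible $1$-cell $\Pal{h(y)}$ runs from $\Top_D h(y)=f g(y)$ to $\Bot_D h(y)=y$; since a reversible $1$-cell is exactly an $\omega$-equivalence of $0$-cells (Definition~\ref{def:omeq}), the object $x=g(y)$ satisfies the first clause. Given parallel $n$-cells $x,x'$ of $C$ and an $(n+1)$-cell $v\colon f x\to f x'$, I set $u=g(v)$, which is an $(n+1)$-cell $x\to x'$ because $g\circ f=\id_C$. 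As $h$ is an $\infty$-functor it commutes with source and target, so the source and target $n$-cylinders of $h(v)$ are $h(\Gls{n}(v))=h(f x)$ and $h(\Glt{n}(v))=h(f x')$, which equal $\Triv_D(f x)$ and $\Triv_D(f x')$ by the axiom $h\circ f=\Triv_D\circ f$. The key point is a sub-lemma, proved by unfolding Definition~\ref{def:cylinder} by induction on $n$: an $(n+1)$-cylinder with trivial source and target cylinders collapses to a single reversible $(n+2)$-cell between its necessarily parallel top and bottom (for $n=0$ this is immediate, the reversible boundary $1$-cells being forced to units so that $\Pal{h(v)}$ directly connects $f g(v)$ to $v$). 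Applied to $h(v)$ this yields a reversible $(n+2)$-cell $f u\to v$, i.e.\ an $\omega$-equivalence, which is the second clause.

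For assertion~(\ref{it:trivcofib}), by Theorem~\ref{thm:folkmodel} the trivial cofibrations of $\ocat$ are the retracts of transfinite composites of pushouts of the generating set $J=\setbis{\jj n}{n\in\NN}$, so it suffices to show that $\Imm$ contains each $\jj n$ and is closed under retracts and transfinite composition (closure under pushout being assertion~(\ref{it:pushout})). That $\jj n$ is an immersion uses the retraction $\pp n$, since $\pp n\circ\jj n=\pp n\circ\kk n\circ\sss n=\ooo n\circ\sss n=\id_{\OO n}$, together with a homotopy $\PP n\to\Cnx{\PP n}$ deformation-retracting $\PP n$ onto $\jj n(\OO n)$ and trivial there; this is the one step carrying real content, and it relies on the explicit description of the factorization object $\PP n$ from Proposition~\ref{prop:factor}. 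Closure under retracts is formal, obtained by conjugating $g$ and $h$ with the retract data and using naturality of $\Top,\Bot,\Triv$.

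Closure under transfinite composition is, I expect, the main obstacle. For a binary composite $f'\circ f$ of immersions with data $(g,h)$ and $(g',h')$, the retraction is $g\circ g'$ and the homotopy is the pointwise concatenation $h'\comp(\Cnx{f'}\circ h\circ g')$; the triviality axiom survives because concatenation with a trivial cylinder acts as a unit. For a transfinite composite $C_0\to C_\lambda=\colim_i C_i$ of pushouts of the $\jj n$, the difficulty is to assemble a single homotopy on the colimit. Here I would exploit that each stagewise homotopy is trivial on the image of the previous stage: writing $H_{m+1}=h_m\comp(\Cnx{f_m}\circ H_m\circ r_m)$ for the accumulated homotopy (with $r_m$ the stage retraction), the unit law together with $h_m\circ f_m=\Triv\circ f_m$ and $r_m\circ f_m=\id$ give $H_{m+1}\circ f_m=\Cnx{f_m}\circ H_m$, so the maps $\Cnx{\iota_m}\circ H_m$ (with $\iota_m\colon C_m\to C_\lambda$) form a cocone and induce $h\colon C_\lambda\to\Cnx{C_\lambda}$ without requiring $\cnx$ to preserve the colimit. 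Verifying the unit laws for concatenation and the behaviour at limit ordinals is the delicate bookkeeping where the real work lies.
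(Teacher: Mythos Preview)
The paper does not prove this proposition; it simply refers to \cite[Section~4.6]{lafontetal:folkms}. Your arguments for~(\ref{it:pushout}) and~(\ref{it:immweq}) are correct and standard: pushout closure is formal from the universal property together with naturality of $\Top,\Bot,\Triv$, and your sub-lemma for~(\ref{it:immweq})---that an $(n{+}1)$-cylinder whose source and target $n$-cylinders are trivial reduces to a single reversible $(n{+}2)$-cell between its top and bottom---follows by a direct induction on Definition~\ref{def:cylinder}.

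For~(\ref{it:trivcofib}) your route through generators and closure properties is workable in principle, but the step you flag as ``carrying real content'' is a genuine gap within this paper: Proposition~\ref{prop:factor} produces $\PP n$ only via the small object argument, so there is no ``explicit description of the factorization object $\PP n$'' available here to build the deformation $\PP n\to\Cnx{\PP n}$. Your transfinite-composition argument is actually sounder than you let on (the cocone $\Cnx{\iota_m}\circ H_m$ indeed avoids any need for $\cnx$ to preserve the colimit, and only the strict-unit law $\Triv\comp W=W$ for concatenation is required), but it is moot without the base case. In the present paper a much shorter argument is available, since Theorem~\ref{thm:folkmodel} is already in hand: every object of $\ocat$ is fibrant and $(\Top_D,\Bot_D)\colon\Cnx D\to D\times D$ is a fibration (this is the path-object factorization of Section~\ref{subsec:path}). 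Given a trivial cofibration $f\colon C\to D$, lift $f$ against $C\to\ast$ with top $\id_C$ to obtain the retraction $g$, and then lift $f$ against $(\Top_D,\Bot_D)$ with top $\Triv_D\circ f$ and bottom $(f\circ g,\id_D)$ to obtain $h$; the commutativity of the second square uses $g\circ f=\id_C$, and the resulting $h$ satisfies all three axioms of Definition~\ref{def:immersions} at once.
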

We refer to~\cite[Section 4.6]{lafontetal:folkms} for the proofs of these statements.

\subsection{Transfer}

Let $\ctg C$, $\ctg D$ be two categories and $L\colon \ctg C\to \ctg D$, $R\colon \ctg D\to \ctg C$ be a pair of functors with $L$ left adjoint to $R$, and suppose that $\ctg C$ is equipped with a model structure. We may define three classes of maps of $\ctg D$ as follows:
\begin{itemize}
\item for each morphism $f$ of $\ctg D$, $f\in\weakeq{\ctg{D}}$ if and only if $R(f)$ is a weak equivalence in $\ctg C$;
\item for each morphism $f$ of $\ctg D$, $f\in\fibr{\ctg{D}}$ if and only if $R(f)$ is a weak equivalence in $\ctg C$;
\item $\cofibr{\ctg{D}}$ is the class of maps having the left-lifting property with respect to $\weakeq{\ctg{D}}\cap\fibr{\ctg{D}}$.
\end{itemize}
We say that $R$ {\em creates} a model structure on $\ctg D$ if $\weakeq{\ctg{D}}$, $\fibr{\ctg{D}}$ and $\cofibr{\ctg{D}}$ are respectively the weak equivalences, fibrations and cofibrations of a model structure on $\ctg D$. Sufficient conditions for this transfer to hold are given by~\cite[Theorem~3.3]{crans:quicms} or~\cite[Proposition~2.3]{beke:shhtwo}. The latter result immediately specializes to the following statement: 
\begin{proposition}\label{prop:transfer}
Let $\ctg C$ a cofibrantly generated model structure, with $I$ a set of generating cofibrations and $J$ a set of generating trivial cofibrations. If $\ctg D$ is locally presentable, then the following conditions are sufficient for $R$ to create a model structure on $\ctg D$: 
\begin{enumerate}[(C1)]
\item the weak equivalences of $\ctg C$ are closed under filtered colimits;
\item $R$ preserves filtered colimits;
\item for each generating trivial cofibration $j$ of $\ctg C$, if $g$ is a pushout of $L(j)$ in $\ctg D$, then $R(g)$ is a weak equivalence of $\ctg C$.
\end{enumerate}
Moreover, if these conditions hold, the model structure so defined is cofibrantly generated and has $L(I)$ as a set of generating cofibrations and $L(J)$ as a set of generating trivial cofibrations.
\end{proposition}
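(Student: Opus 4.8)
The plan is to obtain the statement as a direct specialization of the general transfer criterion~\cite[Proposition~2.3]{beke:shhtwo}, so that the real work is to check that \cone, \ctwo{} and \cthree{} supply its single non-formal hypothesis. First I would record what local presentability of $\ctg D$ and the adjunction $L\dashv R$ give for free. Being locally presentable, $\ctg D$ is complete and cocomplete and all its objects are small, so the small object argument applies to both $L(I)$ and $L(J)$. The adjunction turns every lifting problem into one in $\ctg C$: a morphism $f$ of $\ctg D$ lies in $\inj{L(I)}$ (resp.\ in $\inj{L(J)}$) if and only if $R(f)$ lies in $\inj{I}$ (resp.\ in $\inj{J}$), that is, if and only if $R(f)$ is a trivial fibration (resp.\ a fibration) of $\ctg C$. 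Hence $\inj{L(I)}$ is exactly the class of $f$ with $R(f)$ a trivial fibration, which is precisely $\weakeq{\ctg D}\cap\fibr{\ctg D}$; so the candidate trivial fibrations coincide with $\inj{L(I)}$, the candidate cofibrations $\cofibr{\ctg D}$ with $\Icof{L(I)}$, and the candidate fibrations $\fibr{\ctg D}$ with $\inj{L(J)}$. Since each $j\in J$ is a cofibration of $\ctg C$, its image $L(j)$ lies in $\cofibr{\ctg D}$, so every $\Icell{L(J)}$-map is a cofibration.

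The heart of the proof, and the only place where \cone, \ctwo{} and \cthree{} are genuinely needed, is the \emph{acyclicity} condition: that $R$ sends every $\Icell{L(J)}$-map to a weak equivalence of $\ctg C$. I would prove it by transfinite induction. Writing such a map as a transfinite composite $X_0\to\colim_\beta X_\beta$ of pushouts of single maps $L(j)$ with $j\in J$, I would show that each canonical map $R(X_0)\to R(X_\beta)$ is a weak equivalence. At a successor stage, $R(X_\beta)\to R(X_{\beta+1})$ is the image under $R$ of a pushout of some $L(j)$, hence a weak equivalence by \cthree, and two-out-of-three propagates the hypothesis. At a limit stage, \ctwo{} identifies $R(X_\beta)$ with $\colim_{\gamma<\beta}R(X_\gamma)$, and the arrows $R(X_0)\to R(X_\gamma)$ form a filtered diagram of weak equivalences whose colimit in the arrow category is $R(X_0)\to R(X_\beta)$, which is then a weak equivalence by \cone. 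Running the induction to the top yields the claim.

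With acyclicity established, the remaining verifications are exactly those packaged in Beke's criterion, and I would run them briefly to stay self-contained. The small object argument on $L(I)$ factors any $f$ as an $\Icell{L(I)}$-map (a cofibration) followed by a map in $\inj{L(I)}$ (a trivial fibration); the one on $L(J)$ factors $f$ as an $\Icell{L(J)}$-map, now known to be a trivial cofibration, followed by a map in $\inj{L(J)}=\fibr{\ctg D}$. The classes $\weakeq{\ctg D}$ and $\fibr{\ctg D}$ inherit stability under retracts and two-out-of-three from $\ctg C$, and $\cofibr{\ctg D}$ is stable under retracts since it is defined by a lifting property. One lifting axiom is the definition of $\cofibr{\ctg D}$; for the other, given a trivial cofibration $f$, factoring it as an $\Icell{L(J)}$-map $j$ followed by a fibration $q$ and applying two-out-of-three makes $q$ a trivial fibration, so $f$ lifts against $q$ and is thus a retract of $j$, whence $f$ inherits the left lifting property of $j$ against all fibrations. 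This gives the model structure, cofibrantly generated by $L(I)$ and $L(J)$. The main obstacle is the limit stage of the acyclicity induction: it is precisely what forces the hypotheses \cone{} and \ctwo, with \cthree{} alone handling only the successor steps.
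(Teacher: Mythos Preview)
Your proposal is correct and takes the same approach as the paper: both obtain the statement as a specialization of Beke's transfer criterion~\cite[Proposition~2.3]{beke:shhtwo}. The paper in fact gives no proof at all beyond the sentence preceding the statement (``The latter result immediately specializes to the following statement''), whereas you unpack the argument in detail---in particular the transfinite induction establishing acyclicity, which correctly isolates the roles of \cone, \ctwo{} at limit stages and \cthree{} at successor stages.
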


We now turn to the particular case where $\ctg C=\ocat$, $\ctg D=\ogrp$ and $R$ is the inclusion functor $\IN\colon \ogrp\to\ocat$. Note that $\IN$ has a left-adjoint $\FG\colon \ocat\to\ogrp$ building the free $\infty$-groupoid on an $\infty$-category, as well as a right-adjoint $\MG$, building to the maximal $\infty$-groupoid in an $\infty$-category.  Let us first establish a few properties about the adjunction $\FG\dashv \IN$. 

Let $\TT$ be the monad $\IN\FG$ on $\ocat$. Remark that, for any
$\infty$-groupoid $G$, the free $\infty$-groupoid on the underlying
$\infty$-category $\IN(G)$ is naturally isomorphic to $G$ itself. In other
words, the counit $\varepsilon\colon \FG\IN\to 1$ is a natural isomorphism.  It follows that, for any $\infty$-groupoid $G$, we get an isomorphism
\begin{equation}
  \label{eq:isomgr}
  \eta_{\IN(G)}\colon \IN(G)\to \IN\FG\IN(G),
\end{equation}
where $\eta$ denotes the unit of the adjunction. Now, for each $\infty$-category $C$, $\TT(C)$ is of the form $\IN(G)$ where $G$ is an $\infty$-groupoid, and so is $\cnx\TT(C)$ by Lemma~\ref{lemma:cylgrp}, so that
\begin{equation}
  \label{eq:isomcyl}
  \eta_{\cnx\TT(C)}\colon \cnx\TT(C)\to\TT\cnx\TT(C)
\end{equation}
is an isomorphism, as a special case of~(\ref{eq:isomgr}). Thus, we may define a natural transformation
\begin{displaymath}
  \lambda\colon \TT\cnx\to\cnx\TT
\end{displaymath}
by
\begin{equation}
  \label{eq:lambda}
  \lambda_C = \eta^{-1}_{\cnx\TT(C)}\circ\TT\cnx(\eta_C).
\end{equation}
Note also that the monad multiplication $\mu\colon \TT^2\to\TT$ is also a natural isomorphism, and we get
\begin{equation}
  \label{eq:monadone}
  \TT(\eta_C) = \eta_{\TT(C)} = \mu^{-1}_C.
\end{equation}
We may now state the following result:
\begin{lemma}\label{lemma:immpres}
  The monad $T$ preserves immersions.
\end{lemma}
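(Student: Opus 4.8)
We must show that if $f\colon C\to D$ is an immersion in $\ocat$, then $\TT(f) = \IN\FG(f)$ is again an immersion. By Definition~\ref{def:immersions}, being an immersion is witnessed by a triple $(f, g, h)$ consisting of a retraction $g\colon D\to C$ of $f$ and a homotopy $h\colon D\to\Cnx D$ satisfying conditions (1)--(3). So the plan is to take the witnessing data $(g, h)$ for $f$ and manufacture from it witnessing data $(g', h')$ for $\TT(f)$.

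**The plan.** I would set $g' = \TT(g)\colon \TT(D)\to\TT(C)$, which is immediately a retraction of $\TT(f)$ since $\TT$ is a functor: $\TT(g)\circ\TT(f) = \TT(g\circ f) = \TT(\id_C) = \id_{\TT(C)}$, giving condition (1). The subtle point is the homotopy, because $h$ lands in $\Cnx D$ and applying $\TT$ gives a map into $\TT\Cnx D$, whereas condition (2) for $\TT(f)$ demands a map into $\Cnx\TT(D)$. This is exactly why the natural transformation $\lambda\colon\TT\Cnx\to\Cnx\TT$ was introduced in~(\ref{eq:lambda}). So I would define
\begin{displaymath}
  h' = \lambda_D\circ\TT(h)\colon \TT(D)\to\Cnx\TT(D).
\end{displaymath}
The heart of the proof is then verifying conditions (2) and (3) for the triple $(\TT(f),\TT(g),h')$. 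For condition (2), I would check $\Top_{\TT(D)}\circ h' = \TT(f)\circ\TT(g)$ and $\Bot_{\TT(D)}\circ h' = \id_{\TT(D)}$; for condition (3), I would check $h'\circ\TT(f) = \Triv_{\TT(D)}\circ\TT(f)$.

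**Main obstacle.** The real work is establishing the compatibility of $\lambda$ with the structure maps $\Top$, $\Bot$ and $\Triv$ of the connection functor $\cnx$ — that is, the identities
\begin{displaymath}
  \Top_{\TT(D)}\circ\lambda_D = \TT(\Top_D),\qquad
  \Bot_{\TT(D)}\circ\lambda_D = \TT(\Bot_D),\qquad
  \lambda_D\circ\TT(\Triv_D) = \Triv_{\TT(D)}.
\end{displaymath}
Granting these, condition~(2) follows by a short diagram chase: $\Top_{\TT(D)}\circ h' = \TT(\Top_D)\circ\TT(h) = \TT(\Top_D\circ h) = \TT(f\circ g) = \TT(f)\circ\TT(g)$, and similarly $\Bot_{\TT(D)}\circ h' = \TT(\Bot_D\circ h) = \TT(\id_D) = \id_{\TT(D)}$; condition~(3) follows from $h'\circ\TT(f) = \lambda_D\circ\TT(h\circ f) = \lambda_D\circ\TT(\Triv_D\circ f) = \lambda_D\circ\TT(\Triv_D)\circ\TT(f) = \Triv_{\TT(D)}\circ\TT(f)$, using condition~(3) for the original immersion. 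The compatibility identities for $\lambda$ should be deduced from the naturality of $\Top,\Bot\colon\cnx\to\id$ and $\Triv\colon\id\to\cnx$ (Theorem~\ref{thm:connect}) together with the defining formula~(\ref{eq:lambda}) and the fact that $\Top,\Bot,\Triv$ are natural transformations between functors on which $\eta$ acts; the key is that precomposing the relevant naturality squares for $\eta$ with $\TT\cnx(\eta_C)$ and cancelling the isomorphism $\eta^{-1}_{\cnx\TT(C)}$ yields exactly the three displayed equations. I expect this bookkeeping with the unit $\eta$ and its inverse to be the only genuinely delicate part; everything else is formal functoriality.
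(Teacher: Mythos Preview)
Your proposal is correct and follows essentially the same approach as the paper: you define $g' = \TT(g)$ and $h' = \lambda_D\circ\TT(h)$, isolate the three compatibility identities $\Top_{\TT(D)}\circ\lambda_D = \TT(\Top_D)$, $\Bot_{\TT(D)}\circ\lambda_D = \TT(\Bot_D)$, and $\lambda_D\circ\TT(\Triv_D) = \Triv_{\TT(D)}$, and deduce them from naturality of $\Top,\Bot,\Triv$ together with the formula~(\ref{eq:lambda}) and the monad identity~(\ref{eq:monadone}). The paper proceeds exactly this way, making the naturality argument explicit by drawing the commutative squares obtained from applying $\TT$ to the naturality square of $\Top$ (resp.\ $\Bot$, $\Triv$) at $\eta_D$ and then cancelling the isomorphism $\eta_{\cnx\TT(D)}$.
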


\begin{proof}
  Let $f\colon C\to D$ be an immersion, and $f'=\TT(f)$. By Definition~\ref{def:immersions}, there are $g\colon D\to C$ and $h\colon D\to\Cnx D$ such that
\begin{eqnarray}
  g\circ f & = & \id_C ;
\label{eq:gfid}\\
  \Top_D\circ h & = & f\circ g ;
\label{eq:tophfg} \\
   \Bot_D\circ h & = & \id_D ;
\label{eq:bothid}\\
   h\circ f & = & \Triv_D\circ f.
\label{eq:hftrivf}
\end{eqnarray}
Let $g'=\TT(g)$ and $h'=\lambda_D\circ\TT(h)$. We need to establish the following equations:
\begin{eqnarray}
  g'\circ f' & = & \id_{\TT(C)} ;
   \label{eq:gfid'}\\
  \Top_{\TT(D)}\circ h'& = & f'\circ g';
   \label{eq:tophfg'}\\
  \Bot_{\TT(D)}\circ h' & = & \id_{\TT(D)} ;
   \label{eq:bothid'}\\
  h'\circ f' & = & \Triv_{\TT(D)}\circ f'.
    \label{eq:hftrivf'}
\end{eqnarray}
Equation~(\ref{eq:gfid'}) is just functoriality. Let us prove~(\ref{eq:tophfg'}). First remark that $\Top$ is a natural transformation, so that the following diagram commutes:
\begin{equation}
\label{eq:naturdiag}
  \begin{xy}
    \xymatrix{\cnx(D)\ar[d]_{\cnx(\eta_D)}\ar[r]^{\Top_D} & D\ar[d]^{\eta_D} \\
              \cnx\TT(D)\ar[r]_{\Top_{\TT(D)}} & \TT(D).}
  \end{xy}
\end{equation}
We may now build the following commutative diagram:
\begin{equation}
  \label{eq:bigdiag}
  \begin{xy}
    \xymatrix{\TT\cnx(D)\ar[r]^{\TT(\Top_D)}\ar[d]^{\TT\cnx(\eta_D)}\Dar[dd]_{\lambda_D} & \TT(D)\ar[d]_{\TT(\eta_D)}\Uar[dd]^{\id_{\TT(D)}}\\
              \TT\cnx\TT(D)\ar[r]^{\TT(\Top_{\TT(D)})}\ar[d]^{\eta^{-1}_{\cnx\TT(D)}} & \TT^2(D)\ar[d]_{\eta^{-1}_{\TT(D)}}\\
              \cnx\TT(D)\ar[r]_{\Top_{\TT(D)}}  &  \TT(D).}
  \end{xy}
  \end{equation}
In fact the upper square is the image of~(\ref{eq:naturdiag}) by $\TT$ and the lower square commutes by naturality of $\eta$. Hence
\begin{eqnarray*}
  \Top_{\TT(D)}\circ h' & = & \Top_{\TT(D)}\circ\lambda_D\circ\TT(h)\\
                       & = & \TT(\Top_D)\circ\TT(h)\\
                       & = & \TT(\Top_D\circ h)\\
                       & = & \TT(f\circ g)\\
                       & = & f'\circ g'
\end{eqnarray*}
 which gives~(\ref{eq:tophfg'}). Likewise, we get the following commutative
 diagram:
 \begin{equation}
   \label{eq:smalldiag}
   \begin{xy}
     \xymatrix{\TT(D)\ar[r]^{\TT(h)}\ar[rd]_{\id_{\TT(D)}}\Uar[rr]^{h'} &
     \TT\cnx(D)\ar[r]^{\lambda_D}\ar[d]|{\TT(\Bot_D)} &
     \cnx\TT(D)\ar[ld]^{\Bot_{\TT(D)}},\\
                      & \TT(D) &              }
   \end{xy}
 \end{equation}
where the left hand triangle commutes by applying $\TT$ to~(\ref{eq:bothid}), and the right hand triangle commutes by replacing $\Top$ with $\Bot$ in~(\ref{eq:bigdiag}). Hence $\Bot_{\TT(D)}\circ h'=\id_{\TT(D)}$ and~(\ref{eq:bothid'}) is proved. Finally, by using the naturality of $\Triv$ instead of $\Top$, we get a commutative diagram analogue to~(\ref{eq:bigdiag}):
\begin{equation}
  \label{eq:bigdiag'}
  \begin{xy}
    \xymatrix{\TT(D)\ar[r]^{\TT(\Triv_D)}\ar[d]^{\TT(\eta_D)}\Dar[dd]_{\id_{\TT(D)}} & \TT\cnx(D)\ar[d]_{\TT\cnx(\eta_D)}\Uar[dd]^{\lambda_D}\\
              \TT^2(D)\ar[r]^{\TT(\Triv_{\TT(D)})}\ar[d]^{\eta^{-1}_{\TT(D)}} & \TT\cnx\TT(D)\ar[d]_{\eta^{-1}_{\cnx\TT(D)}}\\
            \TT(D)\ar[r]_{\Triv_{\TT(D)}}   & \cnx\TT(D).
}
  \end{xy}
\end{equation}
Hence
\begin{eqnarray*}
  h'\circ f' & = & \lambda_D\circ\TT(h)\circ\TT(f)\\
             & = & \lambda_D\circ\TT(h\circ f)\\
             & = & \lambda_D\circ\TT(\Triv_D\circ f)\\
             & = & \lambda_D\circ\TT(\Triv_D)\circ f'\\
             & = & \Triv_{\TT(D)}\circ f'
\end{eqnarray*}
which gives~(\ref{eq:hftrivf'}) and ends the proof.
\end{proof}

\begin{lemma}\label{lemma:pushout}
 Let $f\colon C\to D$ be an immersion, and suppose that the following square is a pushout in $\ogrp$:
 \begin{displaymath}
   \begin{xy}
     \xymatrix{\FG C\ar[r]^u\ar[d]_{\FG(f)} & G\ar[d]^g\\
               \FG D\ar[r]_{v} & H.}
   \end{xy}
 \end{displaymath}
Then $\IN(g)$ is an immersion.  
\end{lemma}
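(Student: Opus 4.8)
The plan is to identify $\IN(g)$, up to isomorphism in the arrow category of $\ocat$, with the image under $\TT$ of a pushout of $f$ formed \emph{inside} $\ocat$, and then to invoke the two facts already at our disposal: that immersions are stable under pushout (Proposition~\ref{prop:immersions}(\ref{it:pushout})) and that $\TT$ preserves immersions (Lemma~\ref{lemma:immpres}).

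First I would transport the given square into the image of $\FG$. Writing $\hat u\colon C\to\IN(G)$ for the transpose of $u\colon \FG C\to G$ under the adjunction $\FG\dashv\IN$, the triangle identity gives $\FG(\hat u)=\inv{\varepsilon_G}\circ u$, where $\varepsilon$ denotes the (invertible) counit. Since $\varepsilon_G$ is an isomorphism, inserting it at the apex of the given square shows that
\[
  \begin{xy}
    \xymatrix{\FG C\ar[r]^{\FG(\hat u)}\ar[d]_{\FG(f)} & \FG\IN(G)\ar[d]^{g\circ\varepsilon_G}\\
              \FG D\ar[r]_{v} & H}
  \end{xy}
\]
is again a pushout in $\ogrp$.

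Next I would form the pushout of $f$ along $\hat u$ in $\ocat$:
\[
  \begin{xy}
    \xymatrix{C\ar[r]^{\hat u}\ar[d]_{f} & \IN(G)\ar[d]^{w}\\
              D\ar[r] & P.}
  \end{xy}
\]
As a left adjoint, $\FG$ preserves this pushout, so $\FG(w)\colon \FG\IN(G)\to\FG P$ is a pushout of $\FG(f)$ along $\FG(\hat u)$. Comparing with the transported square, the uniqueness of pushouts yields an isomorphism $\FG P\cong H$ carrying $\FG(w)$ to $g\circ\varepsilon_G$; hence $g$ is $\FG(w)$ pre- and post-composed with isomorphisms. Applying $\IN$ then exhibits $\IN(g)$ as isomorphic, in the arrow category, to $\IN\FG(w)=\TT(w)$.

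It remains to conclude. Since $f$ is an immersion, Proposition~\ref{prop:immersions}(\ref{it:pushout}) shows that its pushout $w$ is an immersion, and Lemma~\ref{lemma:immpres} then shows that $\TT(w)$ is one as well. A short diagram chase---conjugating the retraction $g$ and the cylinder $h$ witnessing an immersion by the ambient isomorphisms and using the naturality of $\Top$, $\Bot$ and $\Triv$---verifies that $\Imm$ is closed under isomorphism in the arrow category, whence $\IN(g)$ is an immersion. The only genuinely delicate point in the whole argument is the adjunction bookkeeping of the middle step: one must check carefully that $\FG$ of the $\ocat$-pushout really coincides, under $\varepsilon_G$, with the given $\ogrp$-pushout, so that $g$ indeed corresponds to $\FG(w)$. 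Everything else is either formal or already established.
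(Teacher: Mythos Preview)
Your argument is correct, but the paper's proof is considerably shorter because it exploits a fact you did not use: the inclusion $\IN$ is itself a \emph{left} adjoint (its right adjoint is the maximal-sub-\oo-groupoid functor $\MG$), and therefore preserves pushouts. Applying $\IN$ directly to the given pushout square in $\ogrp$ yields a pushout in $\ocat$
\[
  \begin{xy}
    \xymatrix{\TT C\ar[r]^{\IN(u)}\ar[d]_{\TT(f)} & \IN G\ar[d]^{\IN(g)}\\
              \TT D\ar[r]_{\IN(v)} & \IN H,}
  \end{xy}
\]
whose left leg $\TT(f)$ is an immersion by Lemma~\ref{lemma:immpres}; closure of $\Imm$ under pushout then gives the result immediately, with no need to form an auxiliary pushout, compare via $\FG$, or check closure of $\Imm$ under isomorphism in the arrow category. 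Your route, by contrast, uses only the adjunction $\FG\dashv\IN$ (never $\MG$): you build the relevant $\ocat$-pushout by hand from the transpose $\hat u$, push it forward by $\FG$, and identify the result with the given square via the invertible counit. This is more laborious but would still go through in a context where the inclusion functor lacked a right adjoint, so the extra work buys a mild robustness that the paper's one-line argument does not have.
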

\begin{proof}
  As $\IN$ is left adjoint to $\MG$, it preserves pushouts, so that the following square is a pushout in $\ocat$:
\begin{displaymath}
   \begin{xy}
     \xymatrix{\TT C\ar[r]^{\IN(u)}\ar[d]_{\TT(f)} & \IN G\ar[d]^{\IN(g)}\\
               \TT D\ar[r]_{\IN(v)} & \IN H.}
   \end{xy}
 \end{displaymath}
By Lemma~\ref{lemma:immpres}, $\TT(f)$ is an immersion, and so is its pushout $\IN(g)$, by Proposition~\ref{prop:immersions}(\ref{it:pushout}).
\end{proof}

\begin{lemma}\label{lemma:cthree}
 If $j$ is a generating trivial cofibration of $\ocat$, and $g$ is a pushout of $\FG(j)$ in $\ogrp$, then $\IN(g)$ is a weak equivalence of $\ocat$. 
\end{lemma}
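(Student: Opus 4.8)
The plan is to show that this lemma follows immediately from the two preceding lemmas together with Proposition~\ref{prop:immersions}, so that the real content has already been established and what remains is purely a matter of assembling the pieces. The guiding observation is that a generating trivial cofibration is in particular a trivial cofibration, which brings the entire theory of immersions to bear.

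First I would record that, by definition of a cofibrantly generated model structure, each generating trivial cofibration $j$ of $\ocat$ is itself a trivial cofibration. By Proposition~\ref{prop:immersions}(\ref{it:trivcofib}), every trivial cofibration is an immersion, so $j$ belongs to the class $\Imm$. This is the only place where the specific nature of $j$ (as opposed to an arbitrary morphism) is used.

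Next, since by hypothesis $g$ arises as a pushout of $\FG(j)$ in $\ogrp$, the hypotheses of Lemma~\ref{lemma:pushout} are met with $f=j$: we have an immersion $j\colon C\to D$ and a pushout square of $\FG(j)$ in $\ogrp$ with right-hand leg $g$. Applying that lemma yields directly that $\IN(g)$ is an immersion. Finally, I would invoke Proposition~\ref{prop:immersions}(\ref{it:immweq}), which asserts that every immersion is a weak equivalence; hence $\IN(g)$ is a weak equivalence of $\ocat$, as required.

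There is no genuine obstacle at this stage. The substantive work was done earlier: Lemma~\ref{lemma:immpres}, establishing that the monad $\TT=\IN\FG$ preserves immersions (via the comparison transformation $\lambda$ and the isomorphisms coming from the counit and unit of $\FG\dashv\IN$), and Lemma~\ref{lemma:pushout}, which uses the fact that $\IN$ preserves pushouts together with closure of $\Imm$ under pushout (Proposition~\ref{prop:immersions}(\ref{it:pushout})). The present lemma is precisely the verification of condition \cthree{} of Proposition~\ref{prop:transfer} for the adjunction $\FG\dashv\IN$, and its proof is a three-line chain: $j$ is an immersion, hence $\IN(g)$ is an immersion, hence $\IN(g)$ is a weak equivalence.
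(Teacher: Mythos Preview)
Your proof is correct and follows exactly the paper's own argument: $j$ is an immersion by Proposition~\ref{prop:immersions}(\ref{it:trivcofib}), then $\IN(g)$ is an immersion by Lemma~\ref{lemma:pushout}, hence a weak equivalence by Proposition~\ref{prop:immersions}(\ref{it:immweq}). The only difference is that you spell out the (trivial) step that a generating trivial cofibration is a trivial cofibration and add surrounding commentary, but the logical chain is identical.
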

\begin{proof}
Let $j$ be a generating trivial cofibration of $\ocat$, and $g$ be a pushout of $\FG(j)$ in $\ogrp$. By Proposition~\ref{prop:immersions}(\ref{it:trivcofib}), $j$ is an immersion, and so is $\IN(g)$, by Lemma~\ref{lemma:pushout}. By Proposition~\ref{prop:immersions}(\ref{it:immweq}), $\IN(g)$ is a weak equivalence.  
\end{proof}

We may finally state the main result of this section:
\begin{theorem}\label{thm:transfer}
The forgetful functor $\IN\colon \ogrp\to\ocat$ creates a model structure
on $\ogrp$ in which the weak equivalences are the morphisms $f$ such that
$\IN(f)$ belongs to $\Weq$. Moreover, the model structure so defined has
$(\FG(\ii{k}))_{k\in\NN}$ as a family of generating cofibrations, and
$(\FG(\jj{k}))_{k\in\NN}$ as a family of generating trivial cofibrations.
\end{theorem}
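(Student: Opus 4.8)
The plan is to apply the transfer criterion of Proposition~\ref{prop:transfer} to the adjunction $\FG\dashv\IN$, taking $\ctg{C}=\ocat$ equipped with the folk model structure of Theorem~\ref{thm:folkmodel}---which is cofibrantly generated with $\I$ as generating cofibrations and $\J$ as generating trivial cofibrations---and $\ctg{D}=\ogrp$. The category $\ogrp$ is locally presentable, as already recorded, so the standing hypothesis of Proposition~\ref{prop:transfer} on $\ctg{D}$ is met. Granting the three conditions \cone, \ctwo, \cthree, the proposition produces a model structure on $\ogrp$ created by $\IN$; by the very definition of $\weakeq{\ctg{D}}$ its weak equivalences are exactly the morphisms $f$ with $\IN(f)\in\Weq$, and by the final clause of the proposition its generating cofibrations and generating trivial cofibrations are $\FG(\I)=(\FG(\ii{k}))_{k\in\NN}$ and $\FG(\J)=(\FG(\jj{k}))_{k\in\NN}$. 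It thus remains only to verify \cone, \ctwo and \cthree.

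Two of these are essentially at hand. Condition \cthree is precisely the content of Lemma~\ref{lemma:cthree}: for any generating trivial cofibration $j$ of $\ocat$ and any pushout $g$ of $\FG(j)$ in $\ogrp$, the morphism $\IN(g)$ is a weak equivalence of $\ocat$. For condition \ctwo I would argue that $\IN$ preserves filtered colimits, reasoning as for $\wcat$: the subcategory $\wgrp$ is closed under filtered colimits inside $\ocat$, indeed inside the presheaf category of globular sets, since being an $\infty$-groupoid is a condition on existence of inverses that is preserved by passing to a filtered colimit. Hence the colimit of a filtered diagram of $\infty$-groupoids computed in $\ogrp$ coincides with the one computed in $\ocat$, which is exactly what preservation by the inclusion $\IN$ means.

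The crux is condition \cone, which asserts that $\Weq$ is closed under filtered colimits in $\ocat$; this concerns the folk weak equivalences alone and does not involve $\ogrp$. The approach I would take exploits that filtered colimits in $\ocat$ are created at the level of underlying globular sets, so that every cell of a filtered colimit, together with its iterated sources and targets, already lives at some finite stage, and by filteredness finitely many such cells may be arranged to come from a single stage. Given a filtered diagram of morphisms $f_i\colon C_i\to D_i$ each in $\Weq$, with colimit $f\colon C\to D$, one checks the two defining conditions of $\Weq$ for $f$ by lifting the relevant cells of $D$ back to some $D_i$, applying the hypothesis there, and transporting the resulting cell and its $\omega$-equivalence forward along the colimit cocone. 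The delicate point, and the main obstacle, is that reversibility and $\omega$-equivalence are defined by mutual coinduction (Definition~\ref{def:omeq}) and are therefore a priori infinitary: a single $\omega$-equivalence may be witnessed by reversible cells in unboundedly many dimensions, so one cannot confine everything to a finite stage at once. The heart of the argument is thus to show that $\omega$-equivalence and reversibility are themselves preserved by the canonical maps into a filtered colimit---so that a cell of the colimit is reversible (resp. two cells are $\omega$-equivalent) as soon as some representative is reversible (resp. $\omega$-equivalent) at a stage---which I would establish by a coinduction, using that the transition functors and the colimit cocone preserve units and all compositions $\Comp n$.

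Once \cone, \ctwo and \cthree are verified, Proposition~\ref{prop:transfer} yields the model structure together with the stated generating families, completing the proof. I expect \cone to absorb essentially all of the difficulty, the coinductive nature of $\omega$-equivalence being the genuine technical hurdle, whereas \ctwo and \cthree are formal consequences of the results established earlier in this section.
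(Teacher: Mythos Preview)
Your overall strategy is exactly the paper's: apply Proposition~\ref{prop:transfer} to the adjunction $\FG\dashv\IN$, then verify \cone, \ctwo, \cthree, and read off the generating sets. The differences lie in how you dispatch \cone\ and \ctwo. For \ctwo\ you argue that $\ogrp$ is closed under filtered colimits in $\ocat$; this is correct, but the paper observes something stronger and quicker: since $\IN$ has a \emph{right} adjoint $\MG$ (the maximal sub-\oo-groupoid, introduced just before Lemma~\ref{lemma:immpres}), it preserves \emph{all} colimits, in particular filtered ones. For \cone\ you identify closure of $\Weq$ under filtered colimits as the crux and sketch a coinductive argument; while your sketch is along the right lines, in the paper this condition is simply cited from~\cite{lafontetal:folkms}, where it is already established, so no new work is needed here. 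In short, your proof is correct but does more than necessary: invoking the right adjoint $\MG$ for \ctwo\ and the reference for \cone\ reduces the proof to a few lines, with \cthree\ (your Lemma~\ref{lemma:cthree}) carrying the genuine content.
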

\begin{proof}
As the model structure on $\ocat$ is cofibrantly generated and $\ogrp$ is
locally presentable, Proposition~\ref{prop:transfer} applies, and it
suffices to check conditions (C1), (C2) and (C3).
Condition (C1) is proved in~\cite{lafontetal:folkms}, and condition 
(C2) follows from the fact that $\IN$ has a right-adjoint $\MG$, hence
preserves colimits, and in particular filtered ones. Condition (C3) is
Lemma~\ref{lemma:cthree}. The statement about generating families follows
from Proposition~\ref{prop:transfer}.  \end{proof}

\begin{remark}
Using Remark~\ref{rem:Cyl_oo_n},
one can easily adapt the proof of the previous theorem to show that a
similar theorem holds for strict $(\infty, n)$-categories. In particular,
the inclusion functor $\wncat \to \wcat$ creates a model structure on
$\wncat$.
\end{remark}

We call the model structure just defined the {\em folk model structure} on \oo-groupoids. We denote its weak equivalences by $\Wfolk$ and its trivial fibrations by $\TFfolk$. Note that a morphism  $f$ is in $\TFfolk$ if and only if $\IN(f)$ is a trivial fibration of $\ocat$. 

\begin{proposition}\label{prop:folktriv}
A morphism $f\colon G\to H$ of \oo-groupoids belongs to $\TFfolk$ if and
only if the following conditions are satisfied:
\begin{enumerate}
\item for every object $y$ of $H$, there exists an object $x$ of $G$ such
that $f(x) = y$;
\item for all $n \ge 1$ and every pair $(u, v)$ of parallel $(n-1)$-arrows of
$G$, the map
\[ G(u, v)_0 \to H(f(u), f(v))_0 \]
is surjective.
\end{enumerate} 
\end{proposition}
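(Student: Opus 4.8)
The plan is to show that $\TFfolk$, which by definition consists of those $f$ such that $\IN(f)$ is a trivial fibration of $\ocat$, is characterized by the stated lifting conditions unwound explicitly. By definition, $\IN(f)$ is a trivial fibration if and only if it has the right-lifting property with respect to the generating cofibrations $\I=\setbis{\ii n}{n\in\NN}$, where $\ii n\colon \DO n\to\OO n$. So the strategy is to translate the lifting property against each $\ii n$ into a concrete statement about filling cells, and to observe that for $\infty$-groupoids these conditions collapse to the two listed ones. Since $\IN$ is a full and faithful inclusion, a lift $\DO n\to\IN(G)$, $\OO n\to\IN(H)$ of $\ii n$ against $\IN(f)$ is the same datum as the corresponding lift computed entirely inside $G$ and $H$, so I can work directly with the $\infty$-groupoids $G$ and $H$.

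First I would treat $n=0$: a lifting problem for $\ii 0\colon \DO 0=\emptyset\to\OO 0$ is exactly the datum of a $0$-cell of $H$ together with the requirement of finding a $0$-cell of $G$ mapping to it, which is condition~(1). Next, for $n\geq 1$, I would use the standard fact that $\OO n$ is the free $\infty$-category on a single $n$-cell with its boundary, and $\DO n$ is the free $\infty$-category on the boundary $\partial Y(n)$. Hence a map $\DO n\to G$ is precisely a pair of parallel $(n-1)$-arrows $(u,v)$ of $G$ (the two faces making up the boundary sphere), a map $\OO n\to H$ extending $\IN(f)$ along $\ii n$ amounts to an $n$-arrow $w\colon f(u)\to f(v)$ of $H$, and a lift is exactly an $n$-arrow $a\colon u\to v$ of $G$ with $f(a)=w$. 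So the right-lifting property against $\ii n$ for $n\geq 1$ says precisely that every $n$-arrow $f(u)\to f(v)$ in $H$, with $(u,v)$ a pair of parallel $(n-1)$-arrows of $G$, lifts to an $n$-arrow $u\to v$ in $G$; this is exactly the surjectivity of $G(u,v)_0\to H(f(u),f(v))_0$ of condition~(2), reindexing so that the $n$-arrows $u\to v$ are the $0$-cells of $G(u,v)$.

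The main point I expect to need care with is the identification of the boundary inclusion $\ii n$ with ``a pair of parallel $(n-1)$-arrows plus an $n$-cell filling them'': one must check that a morphism out of the free $\infty$-category $\OO n$ is determined by, and freely determined by, the image of its unique generating $n$-cell, and similarly that $\DO n$ corepresents parallel pairs. This is the freeness property of $\OO n$ and $\DO n$ recorded in Section~\ref{subsec:folkmscat}, so no genuine difficulty arises, but it is the step where the abstract lifting condition becomes the explicit cell-filling statement. Having matched the $n=0$ case with condition~(1) and the $n\geq 1$ cases with condition~(2), the equivalence follows, since having the right-lifting property against all of $\I$ is equivalent to the conjunction over all $n$. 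No invertibility hypothesis on $G$ or $H$ is actually used beyond the fact that $\IN$ is fully faithful, which is why the same description holds verbatim as for trivial fibrations in $\ocat$ restricted to $\infty$-groupoids.
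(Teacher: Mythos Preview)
Your proposal is correct and follows exactly the approach of the paper: the paper's proof is the one-line observation that $f\in\TFfolk$ iff $\IN(f)$ has the right lifting property with respect to $\I$, together with the remark that the rest ``is just a matter of translation.'' You have simply spelled out that translation (the cases $n=0$ and $n\ge 1$, and the identification of maps out of $\DO n$ and $\OO n$ with parallel pairs and filling cells) in detail, which the paper leaves implicit.
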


\begin{proof}
By definition, $f$ belongs to $\TFfolk$ if and only if $U(f)$ has the right
lifting property with respect to $\I$. This proposition is then just a matter of
translation.
\end{proof}

\section{Comparison}\label{sec:compar}

In this section, we show that the folk model structure on strict
\oo-groupoids defined in the previous section coincides with the
Brown-Golasi\'nski model structure. To see this, it suffices to prove
that they have the same weak equivalences and the same trivial fibrations.

\begin{proposition}\label{prop:same_w_eq}
We have
$\Wgr = \Wfolk = \Wcc$.
\end{proposition}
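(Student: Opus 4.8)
The plan is to exploit Proposition~\ref{prop:cc_w_eq}, which already gives $\Wcc=\Wgr$, so that it remains only to establish $\Wgr=\Wfolk$. By the very definition of the transferred structure, a morphism $f\colon G\to H$ of \oo-groupoids lies in $\Wfolk$ if and only if $\IN(f)$ lies in the folk class $\Weq$ on $\ocat$. The first thing I would record is that, inside an \oo-groupoid, the notion of $\omega$-equivalence specializes to the homotopy relation $\sim$ of Section~\ref{sec:defs}: since every cell of a groupoid is reversible with $\winv u=\inv u$, two parallel $n$-cells are $\omega$-equivalent precisely when there is an $(n+1)$-cell joining them, i.e.\ when they are homotopic.

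With this identification in hand, the core of the argument is to unfold the two clauses defining $\Weq$ for the functor $\IN(f)$ and match them against the homotopy-theoretic description of weak equivalences. Clause~(1) of $\Weq$ says that every object of $H$ is joined by a $1$-cell to some $f(x)$, which is exactly essential surjectivity of $\varpi_1(f)$. Clause~(2) at the level $n=0$ says that every $1$-cell $v\colon f(x)\to f(x')$ admits a lift $u\colon x\to x'$ with $f(u)\sim v$, which is precisely fullness of $\varpi_1(f)$. Clause~(2) at a level $n\ge 1$ says that for parallel $n$-cells $x,x'$ of $G$ every $(n+1)$-cell $v\colon f(x)\to f(x')$ lifts to some $u$ with $f(u)\sim v$; passing to homotopy classes, this is exactly surjectivity of the induced map $\pi_{n+1}(G,x,x')\to\pi_{n+1}(H,f(x),f(x'))$. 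Hence $\IN(f)\in\Weq$ is literally condition~(\ref{item:folk_w_eq}) of Proposition~\ref{prop:def_w_eq}.

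To finish, I would invoke the equivalence $(\ref{item:folk_w_eq})\Leftrightarrow(\ref{item:w_eq})$ already proved in Proposition~\ref{prop:def_w_eq}: condition~(\ref{item:folk_w_eq}) holds if and only if $f$ is a weak equivalence of strict \oo-groupoids, that is $f\in\Wgr$. Combining, $f\in\Wfolk$ if and only if $\IN(f)\in\Weq$ if and only if $f\in\Wgr$, giving $\Wfolk=\Wgr$, and together with Proposition~\ref{prop:cc_w_eq} this yields $\Wgr=\Wfolk=\Wcc$.

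The substantive content is thus already packaged in Proposition~\ref{prop:def_w_eq}, so the only real work here is bookkeeping. The step I expect to require the most care is the translation of the folk clauses into the $\varpi_1$- and $\pi_n$-language: one must be attentive to how parallel cells and the homotopy relation interact with the reindexing $n\mapsto n+1$ relating ``$(n+1)$-cells joining $n$-cells'' to the homotopy sets $\pi_{n+1}(G,u,v)$, and must check that clause~(1) together with the $n=0$ instance of clause~(2) combine to deliver \emph{both} halves---essential surjectivity \emph{and} fullness---of $\varpi_1(f)$, rather than only one of them.
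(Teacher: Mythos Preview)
Your proposal is correct and follows essentially the same route as the paper: reduce to $\Wfolk=\Wgr$ via Proposition~\ref{prop:cc_w_eq}, observe that in an \oo-groupoid $\omega$-equivalence coincides with the homotopy relation, recognize the folk conditions on $\IN(f)$ as condition~(\ref{item:folk_w_eq}) of Proposition~\ref{prop:def_w_eq}, and invoke the equivalence $(\ref{item:w_eq})\Leftrightarrow(\ref{item:folk_w_eq})$. The paper's proof is terser---it simply asserts the identification with condition~(\ref{item:folk_w_eq}) without your clause-by-clause unpacking---but the substance is identical.
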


\begin{proof}
We first show that $\Wfolk = \Wgr$.
In a strict \oo-groupoid, two $n$-arrows $f$ and $g$ are $\omega$-equivalent if and
only if there exists an $(n+1)$-arrow $a \colon f \to g$, that is if and
only if $f$
and $g$ are homotopic. Therefore a morphism of strict \oo-groupoids is in $\Wfolk$
if and only if it satisfies condition \ref{item:folk_w_eq} of Proposition
\ref{prop:def_w_eq}. The statement is thus exactly the equivalence between
conditions \ref{item:w_eq} and \ref{item:folk_w_eq} of this very proposition.

By Proposition \ref{prop:cc_w_eq}, we have $\Wcc = \Wgr$, hence the result.
\end{proof}

\begin{proposition}\label{prop:same_tr_fib}
We have $\TFfolk = \TFcc$.
\end{proposition}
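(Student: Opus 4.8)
The plan is to read off the two classes from the explicit descriptions already in hand and match them condition by condition. Proposition~\ref{prop:folktriv} describes $\TFfolk$ as essential surjectivity on objects, plus surjectivity of $\Homens{G}{u}{v}\to\Homens{H}{f(u)}{f(v)}$ for \emph{every} pair of parallel $(n-1)$-arrows $(u,v)$ and every $n\ge 1$. Proposition~\ref{prop:cc_tr_fib} describes $\TFcc$ as the same object condition, surjectivity on $1$-arrows between objects, and surjectivity of $\Homens{G}{\Glid x}{u}\to\Homens{H}{\Glid{f(x)}}{f(u)}$ only for the \emph{based} arrows $u\colon\Glid x\to\Glid x$ with $n\ge 2$. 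The object condition is literally shared, so the whole statement reduces to comparing the two families of surjectivity conditions on higher cells.

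One inclusion is immediate. If $f\in\TFfolk$, then specialising its surjectivity condition recovers both remaining conditions of Proposition~\ref{prop:cc_tr_fib}: taking $n=1$ gives surjectivity on $1$-arrows between objects (any two $0$-arrows are parallel), and for $n\ge 2$ the pair $(\Glid x,u)$ is a particular parallel pair, giving the based condition. Hence $\TFfolk\subseteq\TFcc$.

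The real work is the reverse inclusion $\TFcc\subseteq\TFfolk$, where the based surjectivity must be promoted to surjectivity for an arbitrary parallel pair. My plan is to exploit invertibility to reduce a general pair to a based one, in the spirit of the translation arguments used in the proof of Proposition~\ref{prop:def_w_eq}. Given parallel $(n-1)$-arrows $u,v$ with $(n-2)$-source $s$ and $0$-source $x=\Gls 0(u)$, I would construct a bijection $\Homens{G}{u}{v}\xrightarrow{\ \sim\ }\Homens{G}{\Glid x}{w}$ for a suitable loop $w\colon\Glid x\to\Glid x$ by whiskering with identities of inverse cells: first send an $n$-arrow $a\colon u\to v$ to $\Glid{\Glw{n-2}(u)}\comp_{n-2}a$, whose source is $\Glw{n-2}(u)\comp_{n-2}u=\Glid s$; then, when $n\ge 3$, compose with $\Glid{\Glw{0}(s)}$ along $\comp_0$ to collapse the base $s$ to $\Glid x$, since $\Glw{0}(s)\comp_0 s=\Glid x$. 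Each whiskering is a bijection on cell sets, its inverse being whiskering with the opposite inverse cell, and each commutes with $f$ because an $\infty$-functor preserves compositions and units and, by uniqueness of inverses, sends $\comp_j$-inverses to $\comp_j$-inverses. Transporting the based surjectivity $\Homens{G}{\Glid x}{w}\to\Homens{H}{\Glid{f(x)}}{f(w)}$ provided by $\TFcc$ along these bijections then yields surjectivity of $\Homens{G}{u}{v}\to\Homens{H}{f(u)}{f(v)}$, which together with the object and $1$-arrow conditions shows $f\in\TFfolk$.

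The step I expect to be the main obstacle is exactly this reduction: keeping the dimensional bookkeeping straight — which inverse to whisker with, on which side, and in which order — and verifying rigorously that the resulting whiskerings are mutually inverse bijections that strictly commute with $f$. Once that naturality is in place, the remainder is a routine translation between the two characterisations.
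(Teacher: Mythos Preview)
Your proposal is correct and takes essentially the same approach as the paper: both prove the nontrivial inclusion $\TFcc\subseteq\TFfolk$ by whiskering with inverses to reduce an arbitrary parallel pair $(u,v)$ to a based pair $(\Glid{x},w)$ with $w\colon\Glid{x}\to\Glid{x}$, then invoking the based surjectivity from Proposition~\ref{prop:cc_tr_fib}.

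The only difference is that your two-step reduction (first $\Glw{n-2}(u)$ along $\comp_{n-2}$, then $\Glw{0}(s)$ along $\comp_0$) is more work than necessary. The paper uses a single whisker with $\Glw{0}(u)$ along $\comp_0$: given $b\colon f(u)\to f(v)$, set $b'=\Glid{\Glw{0}(f(u))}\comp_0 b$, which already lands in $\Homens{H}{\Glid{f(x)}}{\Glw{0}(f(u))\comp_0 f(v)}$; one checks directly that $\Glw{0}(u)\comp_0 v$ has both $(n-2)$-source and $(n-2)$-target equal to $\Glid{x}$, since $\Gls{n-2}(\Glw{0}(u))=\Glw{0}(\Gls{n-2}(u))$ and $u,v$ are parallel. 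Lifting $b'$ to $a'$ via the based condition and setting $a=\Glid{u}\comp_0 a'$ finishes the argument. So the dimensional bookkeeping you flagged as the main obstacle is lighter than you anticipated: one $\comp_0$-whisker suffices, and the verification that it commutes with $f$ is exactly the functoriality-plus-uniqueness-of-inverses argument you already gave.
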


\begin{proof}
To prove the equivalence between the two notions of trivial fibrations, we
will use the descriptions of these notions provided by
Propositions~\ref{prop:cc_tr_fib} and~\ref{prop:folktriv}. The
conditions for being in $\TFfolk$ are a priori stronger. Let $f \colon G \to
H$ be a in $\TFcc$. Let us prove it is actually in $\TFfolk$. There is
nothing to prove for the conditions in dimension $0$ and $1$. Let $n \ge 2$
and let $u, v$ be two parallel $(n-1)$-arrows. We want to show that the map
\[ \Homens{G}{u}{v} \to \Homens{H}{f(u)}{f(v)} \]
is surjective. Let $b$ be an $n$-arrow from $f(u)$ to $f(v)$ in $H$. 
Set $x = \Gls{0}(u)$. Then 
$b' = \Glid{\Glw{0}(f(u))} \comp_0 b$ is an $n$-arrow of $H$
from $\Glid{f(x)}$ to 
$\Glw{0}(f(u)) \comp_0 f(v)$. Since the map
\[
\Homens{G}{\Glid{x}}{\Glw{0}(u) \comp_0 v}
\to
\Homens{H}{f(\Glid{x})}{f(\Glw{0}(u) \comp_0 v)}
\]
is surjective, there exists an $n$-arrow $a'$ of $G$ from
$\Glid{x}$ to $\Glw{0}(u) \comp_0 v$ such that $f(a') =
b'$. Then, the $n$-arrow $a =  \Glid{u} \comp_0 a'$ is from
$u$ to $v$ and we have
\[
\begin{split}
f(a) & = f(\Glid{u} \comp_0 a')\\
     & = \Glid{f(u)} \comp_0 b' \\
     & = \Glid{f(u)} \comp_0 \Glid{\Glw{0}(f(u))} \comp_0 b  \\
     & = b.
\end{split}
\]
\end{proof}

\begin{theorem}
The Brown-Golasi\'nski model structure and the folk model structure on strict
\oo-groupoids coincide.
\end{theorem}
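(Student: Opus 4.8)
The plan is to deduce the theorem from the two preceding propositions by a purely formal model-categorical argument; essentially all of the mathematical content has already been packaged into Propositions~\ref{prop:same_w_eq} and~\ref{prop:same_tr_fib}, and what remains is the observation that a model structure is rigidly determined by any two of its three distinguished classes. The standard facts I would invoke are that, in \emph{any} model category, the cofibrations coincide with $\llp{(\TFfolk)}$, the class of morphisms having the left lifting property with respect to the trivial fibrations, and dually that the fibrations coincide with $\rlp{(\text{trivial cofibrations})}$. Both characterizations are immediate consequences of the lifting and factorization axioms together with the retract argument.

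Given these, the deduction is short. First, by Proposition~\ref{prop:same_tr_fib} the two model structures have the same trivial fibrations, $\TFfolk = \TFcc$; applying the operation $\llp{(-)}$ to this common class shows that the Brown--Golasi\'nski and folk model structures have exactly the same cofibrations. Second, by Proposition~\ref{prop:same_w_eq} they already have the same weak equivalences, $\Wfolk = \Wcc$. Intersecting these two coinciding classes, the two structures have the same trivial cofibrations (the cofibrations that are weak equivalences), and applying $\rlp{(-)}$ to this common class yields that they also have the same fibrations.

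Thus all three distinguished classes---weak equivalences, cofibrations, and fibrations---agree for the two model structures, so the structures are literally identical, which is the assertion of the theorem. I expect no genuine obstacle here: the only subtlety is to use the \emph{correct} pairing of classes. The equalities established are ``same weak equivalences'' and ``same trivial fibrations'', and it is precisely the latter (rather than, say, ``same fibrations'') that feeds cleanly into the lifting characterization of cofibrations. Once phrased as ``a model structure is determined by its weak equivalences together with its cofibrations'', the single input $\TFfolk = \TFcc$ forces equality of cofibrations and the single input $\Wfolk = \Wcc$ then completes the determination.
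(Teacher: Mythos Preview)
Your proposal is correct and follows essentially the same approach as the paper: both deduce the theorem directly from Propositions~\ref{prop:same_w_eq} and~\ref{prop:same_tr_fib}, with your version simply making explicit the standard model-categorical fact (which the paper leaves implicit) that a model structure is determined by its weak equivalences together with its trivial fibrations.
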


\begin{proof}
By the two previous propositions, these model structures have the same weak
equivalences and the same trivial fibrations.
\end{proof}

\bibliographystyle{abbrv}

\bigskip

\textsc{\footnotesize Dimitri Ara, Institut Math\'ematiques de Jussieu,
Universit\'e Paris Diderot~-- Paris 7, Case 7012, B\^atiment Chevaleret,
75205 Paris Cedex 13, France}

\footnotesize{\emph{E-mail address}: \url{ara@math.jussieu.fr}}

\bigskip

\textsc{\footnotesize Fran\c{c}ois M\'etayer, Laboratoire PPS, Universit\'e
Paris Diderot -- Paris 7 \&  CNRS, Case 7014, B\^atiment Chevaleret, 75205
Paris Cedex 13, France}

\footnotesize{\emph{E-mail address}: \url{metayer@pps.jussieu.fr}}

\end{document}